\title{Multiplicativity of the double ramification cycle}
\let\oref\ref
\newcommand{\on}[1]{\operatorname{#1}}
\newcommand{\bb}[1]{{\mathbb{#1}}}
\newcommand{\cl}[1]{{\mathscr{#1}}}
\newcommand{\ca}[1]{{\mathcal{#1}}}
\newcommand{\ul}[1]{{\underline{#1}}}
\newcommand{\hra}{\hookrightarrow}
\newcommand{\sub}{\subseteq}
\newcommand{\tra}{\rightarrowtail}
\theoremstyle{definition}
\newtheorem{definition}{Definition}[section]
\newtheorem{conjecture}[definition]{Conjecture}
\theoremstyle{plain}% default
\newtheorem{proposition}[definition]{Proposition}
\newtheorem{lemma}[definition]{Lemma}
\newtheorem{theorem}[definition]{Theorem}
\theoremstyle{remark}
\newtheorem{remark}[definition]{Remark}
\renewcommand{\phi}{\varphi}
\author{David Holmes, Aaron Pixton, Johannes Schmitt}
\date{\today}
\newcounter{nootje}
\newcommand{\expanded}[1]{#1}
\renewcommand{\expanded}[1]{}
\newcommand{\beq}{\begin{equation}}%depreciated
\newcommand{\eeq}{\end{equation}}%depreciated
\newcommand{\beqs}{\begin{equation*}}%depreciated
\newcommand{\eeqs}{\end{equation*}}%depreciated
\begin{document}
\maketitle
\begin{abstract} 
The double ramification cycle satisfies a basic multiplicative relation $\on{DRC}_a \cdot \on{DRC}_b = \on{DRC}_a \cdot \on{DRC}_{a + b}$ over the locus of compact-type curves, but this relation fails in the Chow ring of the moduli space of stable curves. We restore this relation over the moduli space of stable curves by introducing an extension of the double ramification cycle to the small b-Chow ring (the colimit of the Chow rings of all smooth blowups of the moduli space). We use this to give evidence for the conjectured equality between the (twisted) double ramification cycle and a cycle ${\operatorname P}_g^{d,k}({A})$ described by the second author in \cite{Janda2016Double-ramifica}. 
\end{abstract}

%\shorttableofcontents{Contents}{1}

\tableofcontents

\newcommand{\Mtildes}{ \widetilde{\ca M}^\Sigma}
\newcommand{\Mhat}{\hat{\cl M}}
\newcommand{\sch}[1]{\textcolor{blue}{#1}}

%new:
\newcommand{\Mbar}{\overline{\ca M}}
\newcommand{\MD}{\ca M^\blacklozenge}
\newcommand{\Md}{\ca M^\lozenge}

\section{Introduction}\label{sec:intro}

Given integers $a_1, \dots, a_n$ summing to zero, one defines the \emph{double ramification cycle} $\on{DRC}_{\ul a}$ in the moduli space $\ca M_{g,n}$ of smooth curves by pulling back the unit section of the universal jacobian along the section induced by the divisor $\sum_i a_i [x_i]$, where the $x_i$ are the tautological sections of the universal curve. This class has been extended over the whole of $\Mbar_{g,n}$ by work of Li-Graber-Vakil \cite{Li2001Stable-morphism}, \cite{Li2002A-degeneration-}, \cite{Graber2005Relative-virtua} (extending work of Hain \cite{Hain2013Normal-function} and Grushevsky-Zakharov \cite{Grushevsky2012The-zero-sectio}). An alternative construction of the same cycle was recently given by the first author \cite{Holmes2017Extending-the-d} (and another by Kass-Pagani \cite{Kass2017The-stability-s}, but we will not use the latter in this paper). 

A basic multiplicative relation holds between the double ramification cycles over the locus of curves of compact-type, namely
\begin{equation}\label{eq:relation_intro}
\on{DRC}_{\ul a} \cdot \on{DRC}_{\ul b} = \on{DRC}_{\ul a} \cdot \on{DRC}_{\ul a + \ul b}
\end{equation}
for all vectors $\ul a$, $\ul b$ of ramification data. In \ref{sec:example} we show by means of an example that this relation fails to hold in the Chow ring of $\Mbar_{g,n}$, and moreover that this cannot be corrected by making a different choice of extension of the cycle. 

The aim of this paper is to restore the relation (\oref{eq:relation_intro}) over the whole of $\Mbar_{g,n}$ by working in the (small) $b$-Chow ring $\on{bCH}_{\bb Q}(\Mbar_{g,n})$, defined as the colimit of the Chow rings of all smooth blowups of $\Mbar_{g,n}$ (see \ref{sec:b_Chow}). The transition maps are given by pullback of cycles; the relation to Shokurov's notion of $b$-divisor (\cite{Shokurov19963-fold-log-mode}, \cite{Shokurov2003Prelimiting-fli}) is discussed further in \ref{sec:b_Chow}. Using results of \cite{Holmes2017Extending-the-d}, we construct extensions $\on{bDRC}_{\ul a}$ of the double ramification cycle in the small $b$-Chow ring $\on{bCH}_{\bb Q}(\Mbar_{g,n})$ with two fundamental properties: 

\begin{theorem}\label{thm:pushforward}
The pushforward of $\on{bDRC}_{\ul a}$ to the Chow ring of $\Mbar_{g,n}$ coincides with the standard extension of the double ramification cycle $\overline{\on{DRC}}_{\ul a}$ (as constructed in \cite{Li2001Stable-morphism}, \cite{Li2002A-degeneration-}, and \cite{Graber2005Relative-virtua}, or equivalently in \cite{Holmes2017Extending-the-d}). 
\end{theorem}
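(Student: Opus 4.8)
The plan is to understand the structure of $\on{bDRC}_{\ul a}$ as an element of the $b$-Chow ring. Since $\on{bCH}_{\bb Q}(\Mbar_{g,n})$ is the colimit of Chow rings of smooth blowups, the class $\on{bDRC}_{\ul a}$ is represented by an honest cycle on some particular smooth blowup $b\colon X \to \Mbar_{g,n}$, and this representative arises from the construction of \cite{Holmes2017Extending-the-d}. There, after passing to a suitable blowup (or modification) $X$ of $\Mbar_{g,n}$ on which the section to the universal Jacobian extends, one obtains a genuine cycle class $\on{DRC}_{\ul a}^X$ by pulling back the unit section along the extended section. The pushforward in the statement is the proper pushforward $b_* \colon \on{CH}_{\bb Q}(X) \to \on{CH}_{\bb Q}(\Mbar_{g,n})$ applied to this representative.

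First I would recall precisely the definition of the transition maps in the $b$-Chow ring (pullback of cycles under the blowup maps) and verify that the pushforward $b_*$ used in the theorem statement is well-defined on the colimit, i.e.\ that it is independent of the chosen representative. This follows from the projection formula together with the fact that for a proper birational morphism $f\colon X' \to X$ of smooth varieties one has $f_* f^* = \on{id}$ on Chow groups; hence pushing forward along any further blowup before applying $b_*$ gives the same answer. This reduces the problem to computing $b_* \on{DRC}_{\ul a}^X$ for the specific $X$ on which the cycle is constructed.

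Next I would compare $b_* \on{DRC}_{\ul a}^X$ with the standard extension $\overline{\on{DRC}}_{\ul a}$. The key input is the main comparison theorem of \cite{Holmes2017Extending-the-d}, which already establishes that the double ramification cycle constructed via the resolved section on a blowup, when pushed forward to $\Mbar_{g,n}$, agrees with the Li--Graber--Vakil / relative-virtual-class construction. Concretely, the construction in \cite{Holmes2017Extending-the-d} produces the extended cycle precisely as such a pushforward, so the identity $b_* \on{DRC}_{\ul a}^X = \overline{\on{DRC}}_{\ul a}$ is essentially the defining property of that construction, and the task is to check that the representative chosen for $\on{bDRC}_{\ul a}$ in the $b$-Chow ring is exactly the one appearing in that earlier theorem (rather than some other resolution giving an a priori different $X$).

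The main obstacle, therefore, is bookkeeping rather than deep geometry: one must confirm that the $b$-Chow representative and the resolution used in \cite{Holmes2017Extending-the-d} are compatible, and invoke the independence-of-resolution property to pass between any two admissible blowups. I would handle this by showing that any two resolutions on which the section extends are dominated by a common further blowup, and that the refined Gysin pullback defining the cycle is compatible with pullback along such dominations (so the resulting classes agree in the colimit); then $f_* f^* = \on{id}$ closes the argument. Once these compatibilities are in place, the theorem follows immediately from the cited comparison result, with no further computation required.
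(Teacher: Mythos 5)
Your proposal is correct and takes essentially the same route as the paper, whose entire proof is the remark that the theorem ``follows formally from \cite[\S 6]{Holmes2017Extending-the-d}'': since $\overline{\on{DRC}}_{\ul a}$ is by definition the eventually constant value of the net ${\pi_X}_*\on{DRC}^X_{\ul a}$, one chooses a single $X$ far enough along for both nets and uses $f_*f^* = \on{id}$ for proper birational $f$ between regular stacks --- exactly your well-definedness step for the pushforward $\on{bCH}_{\bb Q}(\Mbar_{g,n}) \to \on{CH}_{\bb Q}(\Mbar_{g,n})$. One small caution: your claim that the Gysin class is ``compatible with pullback along such dominations'' holds only eventually in the directed system (this is precisely the eventual-constancy lemma used to define $\on{bDRC}_{\ul a}$, not an identity for arbitrary dominations), but since a common representative may be taken past both thresholds this does not affect the argument.
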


\begin{theorem}\label{thm:mult_intro}
The relation $\on{bDRC}_{\ul a} \cdot \on{bDRC}_{\ul b} = \on{bDRC}_{\ul a} \cdot \on{bDRC}_{\ul a + \ul b}
$ holds in the small $b$-Chow ring $\on{bCH}_{\bb Q}(\Mbar_{g,n})$. 
\end{theorem}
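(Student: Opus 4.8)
The plan is to reduce the identity, via the colimit definition of the small $b$-Chow ring (\ref{sec:b_Chow}), to an equality of ordinary Chow classes on a single smooth blowup $\pi\colon \Mbar' \to \Mbar_{g,n}$, and there to exploit the group law on a model of the universal Jacobian together with a shearing automorphism. Concretely, I would invoke \cite{Holmes2017Extending-the-d}: after a suitable smooth blowup the Abel--Jacobi section $\sigma_{\ul a}\colon \Mbar_{g,n}\dashrightarrow \ca J$ extends to a morphism into a smooth separated model $\ca N \to \Mbar'$ of the universal Jacobian (the relevant N\'eron model, a group algebraic space over $\Mbar'$ with unit section $e$), and $\on{bDRC}_{\ul a}$ is represented on $\Mbar'$ by the pullback $\tilde\sigma_{\ul a}^{*}[e]$ of the unit section. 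Since the $b$-Chow ring is a filtered colimit over smooth blowups and any finite collection of blowups admits a common smooth refinement, I would pass to a single $\Mbar'$ on which $\tilde\sigma_{\ul a}$, $\tilde\sigma_{\ul b}$ and $\tilde\sigma_{\ul a+\ul b}$ all extend simultaneously. The product in $\on{bCH}_{\bb Q}$ is then computed by the ordinary intersection product of these representatives, so it suffices to prove $\tilde\sigma_{\ul a}^{*}[e]\cdot \tilde\sigma_{\ul b}^{*}[e] = \tilde\sigma_{\ul a}^{*}[e]\cdot \tilde\sigma_{\ul a+\ul b}^{*}[e]$ in $\on{CH}_{\bb Q}(\Mbar')$.

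The essential input is additivity of the extended Abel--Jacobi map, namely $\tilde\sigma_{\ul a+\ul b} = \tilde\sigma_{\ul a} + \tilde\sigma_{\ul b}$ as sections of $\ca N$. Over $\ca M_{g,n}$ this is immediate, since $\sigma_{\ul a}$ is the class of $\ca O(\sum_i a_i x_i)$ and the Jacobian group law is tensor product of line bundles; as $\ca N$ is separated and $\ca M_{g,n}$ is dense in $\Mbar'$, the two morphisms $\tilde\sigma_{\ul a+\ul b}$ and $\tilde\sigma_{\ul a}+\tilde\sigma_{\ul b}$ agree on a dense open and hence everywhere.

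Next I would package both products as pullbacks of a single unit section across a fibre product. Writing $s=(\tilde\sigma_{\ul a},\tilde\sigma_{\ul b})$ and $s'=(\tilde\sigma_{\ul a},\tilde\sigma_{\ul a+\ul b})$ for the induced sections of $\ca N \times_{\Mbar'} \ca N$, whose unit section I again denote by $e$, compatibility of Gysin pullback with products gives
$$\tilde\sigma_{\ul a}^{*}[e]\cdot \tilde\sigma_{\ul b}^{*}[e] = s^{*}[e], \qquad \tilde\sigma_{\ul a}^{*}[e]\cdot \tilde\sigma_{\ul a+\ul b}^{*}[e] = (s')^{*}[e].$$
Now consider the shearing automorphism $\Phi\colon \ca N \times_{\Mbar'} \ca N \to \ca N \times_{\Mbar'} \ca N$, $(x,y)\mapsto(x,x+y)$, an isomorphism of group spaces over $\Mbar'$. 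By the additivity above, $\Phi\circ s = s'$, and since $\Phi$ fixes the unit section (as $e+e=e$) it restricts to an isomorphism of the unit subscheme onto itself, whence $\Phi^{*}[e]=[e]$. Combining these,
$$(s')^{*}[e] = s^{*}\Phi^{*}[e] = s^{*}[e],$$
which is exactly the desired relation; pushing forward recovers \ref{thm:pushforward}'s compatibility as a consistency check.

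I expect the main obstacle to be the first step: arranging that all three sections extend over one common smooth blowup into a single separated group-space model whose group structure is available. N\'eron models over higher-dimensional bases need not exist, and it is precisely the blowups permitted by the $b$-Chow formalism, together with the alignment and extension results of \cite{Holmes2017Extending-the-d}, that guarantee the extension; one must also check that refining to a common blowup preserves both smoothness and the group structure, and that the $b$-class $\on{bDRC}_{\ul a}$ is genuinely represented by $\tilde\sigma_{\ul a}^{*}[e]$ on this refinement. Once the extended sections live in one group space, the remaining shearing computation and the compatibility of Gysin pullbacks with products are routine.
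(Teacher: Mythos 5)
Your core intersection-theoretic mechanism is sound, but the first step of your reduction --- arranging a smooth blowup $\Mbar'$ of $\Mbar_{g,n}$ over which $\sigma_{\ul a}$, $\sigma_{\ul b}$, $\sigma_{\ul a+\ul b}$ all extend to morphisms into a separated group model $\ca N$ over \emph{all} of $\Mbar'$ --- is genuinely unavailable, and the results of \cite{Holmes2017Extending-the-d} that you invoke do not supply it. What \ref{main_theorem:proper} gives is strictly weaker: after a suitable regular modification $X$, the section extends only over an open $\mathring X \subsetneq X$, and it is the double ramification \emph{locus} $\on{DRL}^X_{\ul a}$ (the fibre over the unit section), not the whole domain of definition of the section, that becomes proper over $\Mbar_{g,n}$. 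The obstruction is real, not technical: for a trait through the boundary, the limit of $\sigma_{\ul a}$ in the N\'eron model over that trait can lie in a nontrivial element of the component group, and blowing up the base does not change this, so no proper modification makes the section extend into $\ca J_{g,n}$ everywhere; similarly, a N\'eron model over the whole of a proper modification would require alignment everywhere, whereas the N\'eron-model-admitting stack $\ca M^{\lozenge}_{g,n}$ of [loc.cit.] is \emph{not} proper over $\Mbar_{g,n}$ (the paper's own construction of $\on{bDRC}$ takes $X$ to be a regular \emph{compactification} of $\ca M^{\lozenge}_{g,n}$, on which the section is still only defined on an open). Consequently your identity $(s')^{*}[e]=s^{*}[e]$ can only be formulated over the open locus where all the sections are defined, and as written it does not yet yield an equality in $\on{bCH}_{\bb Q}(\Mbar_{g,n})$; your closing hedge that the extension results ``guarantee the extension'' misreads those results.

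The repair is exactly the bookkeeping the paper does, and your argument survives once it is added. Work on a common regular modification $Z$ dominating representatives for both nets (past the threshold of eventual constancy), restrict to $\mathring Z=\mathring Z_{\ul a}\cap\mathring Z_{\ul b}$, where additivity $\sigma_{\ul a+\ul b}=\sigma_{\ul a}+\sigma_{\ul b}$ holds by your density/separatedness argument (the same one the paper uses), and note the extra inclusion $\mathring Z_{\ul a}\cap\mathring Z_{\ul a+\ul b}\subseteq\mathring Z_{\ul b}$ of \ref{eq:inclusion}: this is needed so that \emph{both} supports $\on{DRL}^Z_{\ul a}\cap\on{DRL}^Z_{\ul b}$ and $\on{DRL}^Z_{\ul a}\cap\on{DRL}^Z_{\ul a+\ul b}$ lie inside $\mathring Z$ (your shearing map needs $\sigma_{\ul b}=\sigma_{\ul a+\ul b}-\sigma_{\ul a}$ to be defined wherever the second product is supported). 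One then proves the equality as cycle classes \emph{supported on these loci}, which are proper over $\Mbar_{g,n}$ by the choice of $Z$, and pushes forward into $\on{CH}_{\bb Q}(Z)$. With that framework in place, your shearing automorphism $\Phi(x,y)=(x,x+y)$ of $\ca J\times_{\mathring Z}\ca J$ (the pulled-back Jacobian; no N\'eron model is needed) is a perfectly good and arguably cleaner substitute for the projection-formula computation in the paper's \ref{lemma:mult}: $\Phi$ preserves $(e,e)$ scheme-theoretically, intertwines $s$ and $s'$, and even yields the scheme-theoretic equality \ref{equality:locus} for free. The gap is thus not in the multiplicativity mechanism but in the false global-extension assumption and the consequent omission of the supports-and-properness argument that makes the local identity into a statement in the $b$-Chow ring.
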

This result holds also for the $\omega^{\otimes k}$-twisted version of the double ramification cycle, with essentially the same proof. 

Note that the pushforward map from small $b$-Chow ring $\on{bCH}_{\bb Q}(\Mbar_{g,n})$ to the Chow ring $\on{CH}_{\bb Q}(\Mbar_{g,n})$ is \emph{not} a ring homomorphism, so these results do \emph{not} imply multiplicativity of the $\overline{\on{DRC}}$ in $\on{CH}_{\bb Q}(\Mbar_{g,n})$. 

The relation (\oref{eq:relation_intro}) is extremely natural, and we might speculate that its failure to hold in the Chow group of $\Mbar_{g,n}$ suggests that this is not the most natural setting in which to consider the double ramification cycle. Perhaps the $b$-Chow version of the double ramification cycle is the more fundamental object, or at least a shadow thereof? %In \ref{sec:questions}, we speculate on more general relations in the ring of double ramification cycles. %, and discuss possible applications to constructing tautological relations. 

Conjecture 1.4 of \cite{Holmes2017Extending-the-d} predicts that the cycle $\overline{\on{DRC}}_{\ul a}$ in $\on{CH}_{\bb Q}(\Mbar_{g,n})$ coincides with a cycle $2^{-g}\mathrm{P}_g^{g,k}({\ul A})$ constructed by the second named author; more details are given in \ref{sec:relation_to_P}. For $k=0$ this follows from the main theorem of \cite{Janda2016Double-ramifica}, but it is open for higher $k$. In \ref{prop:pixton_eq_ct} we verify this conjecture on the locus of compact-type curves. 

In \ref{sec:treelike} we show that the multiplicativity relation \ref{eq:relation_intro} holds in the Chow ring of the locus of \emph{treelike} curves --- curves whose dual graph has cycles of length at most $1$. In particular, if the conjectured equality between $\overline{\on{DRC}}_{\ul a}$ and $2^{-g}\mathrm{P}_g^{g,k}({\ul A})$ holds true, then in turn the cycle $\mathrm{P}_g^{g,k}({\ul A})$ must also satisfy this multiplicativity relation on the locus of treelike curves. In \ref{prop:pixton_mult} we give a direct, combinatorial proof of this multiplicativity relation for $\mathrm{P}_g^{g,k}({\ul A})$, providing evidence for the conjectural equality between $\overline{\on{DRC}}_{\ul a}$ and $2^{-g}\mathrm{P}_g^{g,k}({\ul A})$.
\subsection*{Acknowledgements}

Part of this work was carried out at ETH Z\"urich, where the first author was kindly invited by Rahul Pandharipande. The second author was supported by a fellowship from the Clay Mathematics Institute. The third author wants to thank Felix Janda and Samuel Grushevsky for very useful advice. The third author was supported by the grant SNF-200020162928.

\subsection*{Notation and setup}

We write $\hra$ for open immersions and $\tra$ for closed immersions. We work over a field of characteristic zero, so that we can assume resolution of singularities. See \ref{sec:log_version} for an approach that works in arbitrary characteristic. %$\on{Spec}\bb Z$, but the reader who prefers to work over $\on{Spec} \bb C$ can do so with no modifications to what follows. 

For us, `curve' means proper, flat, finitely presented, with reduced connected nodal geometric fibres, and $\Mbar_{g,n}$ denotes the usual Deligne-Mumford-Knudsen compactification of the moduli stack of smooth curves of genus g with $n$ disjoint ordered marked sections. We write $\ca C_{g,n}/\Mbar_{g,n}$ for the universal curve, $x_i$ for the sections, and $\omega$ for the relative dualising sheaf. We let $\ca J_{g,n} = \on{Pic}^0_{\ca C_{g,n}/\Mbar_{g,n}}$ denote the universal jacobian (a semiabelian scheme, the fibrewise connected component of the identity in $ \on{Pic}_{\ca C/\Mbar}$).

\section{Extending the double ramification cycle}\label{sec:extending_DRC}

Here we recall briefly the construction of the extension of the double ramification cycle given in \cite{Holmes2017Extending-the-d}. Given integers $\ul a = (a_1, \dots, a_n, k)$ with $\sum_i a_i = k(2g-2)$, we define a section $\sigma_{\ul a} = [\omega^{\otimes k}\left(-\sum_i a_i x_i\right)]$ of $\ca J_{g,n}$ over $\ca M_{g,n}$ (which does not in general extend over the whole of $\Mbar_{g,n}$). 

 Let $f\colon X \to \Mbar_{g,n}$ be a proper birational map from a regular stack (a `regular modification'). The section $\sigma_{\ul a}$ is then defined on some dense open of $X$. We write $\mathring X$ for the largest open of $X$ on which this rational map can be extended to a morphism, and $\sigma_{\ul a}^X\colon \mathring X \to J$ for the extension. 

We define the \emph{double ramification locus} $\on{DRL}_{\ul a}^X \tra \mathring X$ to be the schematic pullback of the unit section of $\ca J_{g,n}$ along $\sigma_{\ul a}^X$, and the \emph{double ramification cycle} $\on{DRC}_{\ul a}^X$ to be the cycle-theoretic pullback, as a cycle supported on $\on{DRL}_{\ul a}^X$. Now the map $\mathring X \to \Mbar_{g,n}$ is rarely proper, but we have: 

\begin{theorem}[\cite{Holmes2017Extending-the-d}, theorem 1.1]\label{main_theorem:proper}
In the directed system of all regular modifications of $\Mbar_{g,n}$, those $X$ such that $\on{DRL}_{\ul a}^X \to \Mbar_{g,n}$ is proper form a cofinal system. 
\end{theorem}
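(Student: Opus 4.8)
The plan is to test properness of $\on{DRL}_{\ul a}^X \to \Mbar_{g,n}$ by the valuative criterion and to translate it into a purely combinatorial condition on the modification $X$ that can always be achieved by blowing up further. Since properness may be checked étale-locally on the target and after base change to traits, I would first localise near a geometric point of the boundary with dual graph $\Gamma$. There the boundary is toroidal: the completed local ring is, up to the action of $\on{Aut}(\Gamma)$, a power series ring with one smoothing parameter $t_e$ for each edge (node) $e \in E(\Gamma)$, times a smooth factor. Over such a stratum the universal jacobian $\ca J_{g,n}$ is a semiabelian scheme, an extension of the jacobian of the normalisation by a torus of rank $b_1(\Gamma)$, and this is the geometry that $\sigma_{\ul a}$ must be analysed against.

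Next I would make precise how $\sigma_{\ul a}$ degenerates. Over the smooth locus the class $[\omega^{\otimes k}(-\sum_i a_i x_i)]$ has degree, hence multidegree, zero, so it lies in $\ca J = \on{Pic}^0$; but along the boundary its restriction to the special fibre has a multidegree that is in general not balanced, so to extend into $\on{Pic}^0$ one must twist by a vertical divisor $\sum_v n_v C_v$ supported on the components of the special fibre. Whether an integral such twist balances the multidegree, and whether the resulting section then meets the unit section, is governed entirely by the valuations $v(t_e)$ of the smoothing parameters along a given trait: the ``torus part'' of $\sigma_{\ul a}$ is a monomial in the $t_e$ with exponents determined by $\ul a$ and $\Gamma$, and both the indeterminacy of $\sigma_{\ul a}^X$ and the locus where it meets the unit section are detected by comparing these monomials.

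This motivates a combinatorial condition, which I will call \emph{alignment}: $X$ is aligned if at every point the effective Cartier divisors $\on{div}(f^* t_e)$ are comparable under divisibility (equivalently, the monomials pulling back the $t_e$ are totally ordered in the local monoid of boundary divisors). The heart of the argument, and the step I expect to be hardest, is the equivalence
\[
\on{DRL}_{\ul a}^X \to \Mbar_{g,n} \text{ proper} \iff X \text{ is aligned with respect to } \ul a.
\]
For ``aligned $\Rightarrow$ proper'' I would verify the valuative criterion: given a trait $T$ whose generic point maps into $\on{DRL}_{\ul a}^X$, alignment forces the vertical twist and the valuations $v(t_e)$ to remain compatible along all of $T$, so $\sigma_{\ul a}^X$ extends over the closed point and still meets the unit section there, and $T$ lifts to $\on{DRL}_{\ul a}^X$. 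Conversely, non-alignment at a point produces a family—e.g. over the blowup separating two incomparable $t_e, t_{e'}$—along which the locus where $\sigma_{\ul a}^X$ meets the unit section escapes the open $\mathring X$, contradicting properness. The delicate bookkeeping is tracking the semiabelian degeneration and the vertical twist simultaneously and checking that the relevant comparisons depend only on $\ul a$ and $\Gamma$, not on the chosen trait.

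Finally I would establish cofinality by a toroidal resolution argument. Given any regular modification $Y$, the functions $t_e$ determine a fan structure on its boundary; subdividing so that the rays of the $t_e$ become comparable—for instance by iterating blowups of loci $\{t_e = t_{e'} = 0\}$ along incomparable pairs—yields an aligned modification, and a further resolution of singularities (available in characteristic zero, as assumed throughout) restores regularity without destroying alignment. The resulting $X$ dominates $Y$ and has $\on{DRL}_{\ul a}^X \to \Mbar_{g,n}$ proper, so aligned regular modifications are cofinal, proving the theorem. Some care is needed to perform the subdivision compatibly with the $\on{Aut}(\Gamma)$-action so as to remain within the category of Deligne--Mumford stacks.
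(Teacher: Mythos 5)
This statement carries no proof in the present paper: it is quoted verbatim from \cite{Holmes2017Extending-the-d}, so your attempt must be compared with the proof given there. Your proposal reconstructs that proof's strategy remarkably faithfully: Holmes's argument does turn on a combinatorial condition on regular modifications which he likewise calls \emph{alignment} (comparability, under divisibility, of the pullbacks of the smoothing parameters $t_e$ in the local monoid of boundary divisors), he proves that alignment forces $\on{DRL}_{\ul a}^X \to \Mbar_{g,n}$ to be proper by essentially the valuative-criterion analysis of the semiabelian degeneration, multidegrees and vertical twists that you sketch, and he obtains cofinality by the toroidal subdivision-plus-resolution-of-singularities argument you describe (this is where the stacks $\widetilde{\ca M}^\Sigma$ and the opens $\ca M^{\lozenge}$, $\ca M^{\blacklozenge}$ of loc.\ cit.\ enter), including the care about $\on{Aut}(\Gamma)$-equivariance. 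One small inaccuracy: alignment in Holmes's sense requires the labels $\on{div}(f^*t_e)$ to be totally ordered only among edges lying in a common circuit of the dual graph $\Gamma$, not among all edges simultaneously; your stronger global condition is still attainable by further blowups and still yields the theorem, so this costs nothing, but it is not the sharp notion.

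There is, however, one genuine error in your plan: the biconditional you announce as the heart of the argument is false in the converse direction. Properness of $\on{DRL}_{\ul a}^X \to \Mbar_{g,n}$ does not imply alignment. For instance, for $\ul a = 0$ the section $\sigma_{\ul a}$ is the unit section, so $\mathring X = X$ and $\on{DRL}_{\ul a}^X = X$ is proper over $\Mbar_{g,n}$ for \emph{every} regular modification $X$, aligned or not; more generally, the closure of the double ramification locus may simply avoid the non-aligned strata (over the treelike locus, for example, the section always extends and no alignment condition is ever invoked). Your proposed construction of a counterexample from a non-aligned point tacitly assumes the bad locus is approached by $\on{DRL}_{\ul a}^X$, which need not happen. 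Fortunately the logical structure of your final paragraph uses only the true implication (aligned $\Rightarrow$ proper) together with the domination statement, which is exactly how the proof in \cite{Holmes2017Extending-the-d} runs; so the theorem survives once you delete the converse, but as written you have earmarked your hardest step partly for proving something false.
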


Now $\on{DRC}_{\ul a}^X$ is supported on $\on{DRL}_{\ul a}^X$, so when the map $\on{DRL}_{\ul a}^X \to \Mbar_{g,n}$ is proper we can take the pushforward of $\on{DRC}_{\ul a}^X$ to $\Mbar_{g,n}$. Writing ${\pi_X}_*\on{DRC}_{\ul a}^X$ for the resulting cycle on $\Mbar_{g,n}$, we have:

\begin{theorem}[\cite{Holmes2017Extending-the-d}, theorem 1.2]\label{main_theorem:limit}
The net ${\pi_X}_*\on{DRC}_{\ul a}^X$ is eventually constant in the Chow ring  $\on{CH}_{\bb Q}(\Mbar_{g,n})$. We denote the limit by $\overline{\on{DRC}}_{\ul a}$. 
\end{theorem}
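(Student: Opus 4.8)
The plan is to compare the pushforwards attached to two comparable modifications, show they agree once the base modification is large enough, and then deduce eventual constancy formally from directedness. Write $c(X) \defeq {\pi_X}_*\on{DRC}_{\ul a}^X \in \on{CH}_{\bb Q}(\Mbar_{g,n})$ for a regular modification $X$ lying in the cofinal system of \ref{main_theorem:proper} (those with $\on{DRL}_{\ul a}^X \to \Mbar_{g,n}$ proper); this subsystem is itself directed. It then suffices to exhibit one $X_0$ such that $c(X') = c(X_0)$ for every $X'$ dominating $X_0$ via a morphism $p\colon X' \to X_0$ over $\Mbar_{g,n}$, since the principal up-set of $X_0$ is cofinal and the net thus becomes eventually constant with value $c(X_0)$.

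The main input is the behaviour of the defining section under refinement, packaged through refined Gysin theory. The zero section $e\colon \Mbar_{g,n} \to \ca J_{g,n}$ is a regular closed immersion of codimension $g$, since $\ca J_{g,n}$ is smooth and semiabelian of relative dimension $g$ over $\Mbar_{g,n}$; it therefore carries a refined Gysin homomorphism $e^!$, and by construction $\on{DRC}_{\ul a}^X = e^!\big([\mathring X]\big)$ as a cycle supported on $\on{DRL}_{\ul a}^X$. Over the open $p^{-1}(\mathring X_0) \cap \mathring{X'}$ the extended sections satisfy $\sigma_{\ul a}^{X'} = \sigma_{\ul a}^{X_0} \circ p$, so on this locus $\on{DRL}_{\ul a}^{X'} = p^{-1}\on{DRL}_{\ul a}^{X_0}$, and since $p$ is proper and birational the standard compatibility of refined Gysin homomorphisms with proper pushforward shows that the part of $\on{DRC}_{\ul a}^{X'}$ lying over $\mathring X_0$ pushes forward under the induced map $\on{DRL}_{\ul a}^{X'} \to \on{DRL}_{\ul a}^{X_0}$ to $\on{DRC}_{\ul a}^{X_0}$. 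Composing with the structure maps to $\Mbar_{g,n}$, this part contributes exactly $c(X_0)$ to $c(X')$.

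Hence the entire discrepancy $c(X') - c(X_0)$ is the pushforward of the remaining part of $\on{DRC}_{\ul a}^{X'}$, namely those components of $\on{DRL}_{\ul a}^{X'}$ lying over the indeterminacy locus $X_0 \setminus \mathring X_0$ of $\sigma_{\ul a}^{X_0}$ (the section extends over strictly more of the finer model $X'$). Showing that this correction term vanishes once $X_0$ is sufficiently large is the heart of the matter, and the step I expect to be hardest. Here one must use the actual geometry of \cite{Holmes2017Extending-the-d}: the indeterminacy of $\sigma_{\ul a}$ is exactly where the naive section escapes the identity component of the relative Picard scheme into the semiabelian degeneration, and the regular modifications are engineered (via a twist/stability condition of Néron-model type) precisely to resolve this. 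The remaining task is to verify that, once $\mathring X_0$ is large enough to contain the generic point of every component of the (proper) double ramification locus, no new component of $\on{DRL}_{\ul a}^{X'}$ can dominate a codimension-$g$ subvariety of $\Mbar_{g,n}$, so that the correction is either absent or pushes forward to zero; a dimension and multiplicity bookkeeping against the semiabelian structure of $\ca J_{g,n}$ over the boundary closes the argument.

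Finally, the $\omega^{\otimes k}$-twisted case runs identically: twisting alters the section $\sigma_{\ul a}$ only by the everywhere-defined class of $\omega^{\otimes k}$, which changes neither the fact that $e$ is a regular embedding of codimension $g$ nor the compatibility and dimension estimates above, so the same reduction and stabilization apply verbatim.
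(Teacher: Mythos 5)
This statement is quoted from \cite{Holmes2017Extending-the-d} (theorem 1.2, proved there as theorem 6.3), so the paper itself offers no proof beyond the citation; the paper's only gloss on the argument is in \S\ref{sec:b_DRC}, where it notes that the limiting value is obtained by taking $X$ a regular compactification of the stack $\ca M^\lozenge_{g,n}$ of [loc.\ cit.]. Measured against that, your first two paragraphs are sound: the reduction of eventual constancy to constancy on the principal up-set of a single $X_0$ inside the cofinal directed system of \ref{main_theorem:proper} is correct, as is the Gysin bookkeeping --- $e$ is a regular immersion of codimension $g$, one has $p^{-1}(\mathring X_0)\sub \mathring X'$ with $\sigma_{\ul a}^{X'}=\sigma_{\ul a}^{X_0}\circ p$ there, and compatibility of $e^!$ with proper pushforward identifies the part of $\on{DRC}_{\ul a}^{X'}$ over $\mathring X_0$ with $\on{DRC}_{\ul a}^{X_0}$ after pushforward.

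The genuine gap is exactly where you flag it: the vanishing of the correction term supported over $X_0\setminus \mathring X_0$ is asserted, not proved, and the mechanism you propose cannot work as stated. The class $\on{DRC}_{\ul a}^{X'}$ has codimension exactly $g$, so its pushforward to $\Mbar_{g,n}$ is not killed by dimension reasons alone; moreover $\on{DRL}_{\ul a}^{X'}$ may well have excess-dimensional components, and components of the double ramification locus lying entirely over the boundary do contribute nontrivially to $\overline{\on{DRC}}_{\ul a}$ (the boundary terms in $\mathrm{P}_g^{g,k}$, and the very failure of multiplicativity in \ref{sec:example}, witness this), so no soft ``dimension and multiplicity bookkeeping against the semiabelian structure'' can rule out contributions from new components over the old indeterminacy locus. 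What the actual proof in \cite{Holmes2017Extending-the-d} supplies, and what your sketch is missing, is the geometric input that makes the correction term \emph{empty} rather than merely push forward to zero: one takes $X_0$ to dominate a regular compactification of the universal N\'eron-model-admitting stack $\ca M^\lozenge_{g,n}$, over which $\sigma_{\ul a}$ extends globally, and uses properness of $\on{DRL}_{\ul a}^{X_0}\to\Mbar_{g,n}$ together with the universal property of $\ca M^\lozenge_{g,n}$ to show that for every finer $X'$ the locus $\on{DRL}_{\ul a}^{X'}$ is contained in $p^{-1}(\mathring X_0)$, so the comparison is an honest pullback with no residual term. Without this (or an equivalent substitute), your third paragraph restates the difficulty of the theorem rather than resolving it, so the proposal is an incomplete proof outline rather than a proof.
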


In the case $k=0$ it is shown in \cite{Holmes2017Extending-the-d} that this class $\overline{\on{DRC}}_{\ul a}$ coincides with the class constructed by Li, Graber, and Vakil. 

%\begin{theorem}[\ref{main_theorem:comparison_full}]\label{main_theorem:comparison}
%Suppose that $k=0$. The class $\overline{\on{DRC}}$ coincides with the extension of the double ramification cycle defined by Li, Graber, and Vakil. \end{theorem}

\section{Multiplicativity lemma}\label{sec:multiplicativity}
Let $S$ be a regular algebraic stack, and $G/S$ a smooth separated group scheme with unit section $e$. Given $\sigma \in G(S)$ a section, we define 
\begin{equation*}
L_\sigma = \sigma^*e
\end{equation*}
as a closed substack of $S$, and 
\begin{equation*}
C_\sigma = \sigma^*[e]
\end{equation*}
as a cycle class supported on $L_\sigma$. 
\begin{lemma}[Multiplicativity lemma]\label{lemma:mult}
Let $\pi\colon G\to S$ be as above, and let $\sigma$, $\tau \in G(S)$ be two sections. Then we have 
\begin{equation}\label{equality:locus}
L_\sigma \times_S L_\tau = L_\sigma \times_S L_{\sigma + \tau}
\end{equation}
as closed substacks of $S$, and 
\begin{equation}\label{equality:cycle}
C_\sigma \cdot C_\tau = C_\sigma \cdot C_{\sigma + \tau}
\end{equation}
as cycles supported on $L_\sigma \times_S L_\tau$. 
\end{lemma}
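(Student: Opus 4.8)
The plan is to deduce both statements from a single \emph{shearing automorphism} of the fibre product $G \times_S G$. Writing $m$ for the group law, define the $S$-morphism
$$\Phi\colon G\times_S G \to G\times_S G, \qquad (g,h)\mapsto (g,\, g+h).$$
This is an isomorphism (the analogous shearing by $-g$ in the second coordinate is a two-sided inverse), and it fixes the unit section, $\Phi\circ(e,e) = (e,e)$, so that $\Phi^{-1}$ carries the closed substack $(e,e)(S)\subset G\times_S G$ isomorphically onto itself. The whole proof rests on the single observation that $\Phi$ converts the combined section $(\sigma,\tau)$ into $(\sigma,\sigma+\tau)$, that is, $\Phi\circ(\sigma,\tau) = (\sigma,\sigma+\tau)$.

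For the locus equality (\oref{equality:locus}) I would first record that $L_\sigma$ is the scheme-theoretic preimage $\sigma^{-1}\big(e(S)\big)$ of the unit section, so that the intersection $L_\sigma\times_S L_\tau$ is exactly the preimage of the unit section of $G\times_S G$ under $(\sigma,\tau)$; concretely $L_\sigma\times_S L_\tau = (\sigma,\tau)^{-1}\big((e,e)(S)\big)$ as closed substacks of $S$, since the ideal of $(e,e)(S)$ is the sum of the two pulled-back unit ideals. The same identity with $\tau$ replaced by $\sigma+\tau$ gives $L_\sigma\times_S L_{\sigma+\tau} = (\sigma,\sigma+\tau)^{-1}\big((e,e)(S)\big)$. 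Then
$$(\sigma,\sigma+\tau)^{-1}\big((e,e)(S)\big) = (\sigma,\tau)^{-1}\Phi^{-1}\big((e,e)(S)\big) = (\sigma,\tau)^{-1}\big((e,e)(S)\big),$$
using $\Phi\circ(\sigma,\tau)=(\sigma,\sigma+\tau)$ and $\Phi^{-1}\big((e,e)(S)\big)=(e,e)(S)$. This proves (\oref{equality:locus}).

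For the cycle equality (\oref{equality:cycle}) I would run the same argument one categorical level up, replacing scheme-theoretic preimages by refined Gysin pullbacks. The unit section $(e,e)\colon S\to G\times_S G$ is a regular embedding, being the unit of a smooth separated group scheme, so it carries a refined Gysin homomorphism; the first reduction is to identify the intersection product $C_\sigma\cdot C_\tau$ with the single class $(\sigma,\tau)^![(e,e)]$ obtained by pulling back the unit section of $G\times_S G$ along $(\sigma,\tau)$. This is the standard compatibility of the intersection product with exterior products of regular embeddings (the normal bundle of $(e,e)$ is the direct sum of the two copies of the normal bundle of $e$), applied on the regular stack $S$ via intersection theory on algebraic stacks. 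Granting this, the equality follows from functoriality of refined Gysin maps under the isomorphism $\Phi$: since $\Phi$ is an $S$-isomorphism with $\Phi^{-1}\big((e,e)(S)\big)=(e,e)(S)$, we have $\Phi^![(e,e)] = [(e,e)]$, and hence
$$(\sigma,\sigma+\tau)^![(e,e)] = \big(\Phi\circ(\sigma,\tau)\big)^![(e,e)] = (\sigma,\tau)^!\,\Phi^![(e,e)] = (\sigma,\tau)^![(e,e)].$$
Unwinding the first reduction on both sides gives $C_\sigma\cdot C_{\sigma+\tau} = C_\sigma\cdot C_\tau$, supported on the common locus from (\oref{equality:locus}).

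The two reductions to pullbacks along sections of the unit of $G\times_S G$ are routine once $\Phi$ is in hand. The substantive point — and the step I expect to be the main obstacle — is the cycle-level reduction $C_\sigma\cdot C_\tau = (\sigma,\tau)^![(e,e)]$ together with the Gysin-functoriality $\big(\Phi\circ(\sigma,\tau)\big)^! = (\sigma,\tau)^!\circ\Phi^!$, since these require care with refined Gysin homomorphisms, normal cones, and commutativity of the intersection product in the setting of algebraic stacks rather than schemes. Everything else is formal.
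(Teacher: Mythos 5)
Your proof is correct, and it reaches the same pivot as the paper --- both arguments come down to identifying $C_\sigma\cdot C_\tau$ and $C_\sigma\cdot C_{\sigma+\tau}$ with refined pullbacks of the class $[(e,e)]$ on $G\times_S G$ along the sections $(\sigma,\tau)$ and $(\sigma,\sigma+\tau)$ --- but the mechanism you use to compare the two pullbacks is genuinely different. The paper works with the closed immersion $i=(e\circ\pi,\mathrm{id})\colon G\to G\times_S G$ and the multiplication $m\colon G\times_S G\to G$, and runs a chain of equalities: $\sigma^*[e]\cdot(\sigma+\tau)^*[e]=(\sigma,\tau)^*\bigl(i_*[G]\cdot m^*[e]\bigr)$, after which the projection formula together with $m\circ i=\mathrm{id}_G$ collapses $i_*[G]\cdot m^*[e]$ to $[(e,e)]$, and the final step $(\sigma,\tau)^*[(e,e)]=\sigma^*[e]\cdot\tau^*[e]$ finishes. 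Your shearing automorphism is exactly $\Phi=(\mathrm{pr}_1,m)$, so the identity $\Phi^{-1}\bigl((e,e)(S)\bigr)=(e,e)(S)$ encodes precisely the geometric content that the paper extracts via the projection formula; but transporting the \emph{section} along $\Phi$ rather than manipulating the \emph{cycle} on $G\times_S G$ lets you avoid the projection formula and the $i_*i^*$ bookkeeping entirely, and it makes the scheme-theoretic statement (\oref{equality:locus}) and the cycle statement (\oref{equality:cycle}) two instances of one argument. The trade-off is that you invoke the product compatibility $(\sigma,\tau)^![(e,e)]=\sigma^![e]\cdot\tau^![e]$ twice (once for $(\sigma,\tau)$ and once for $(\sigma,\sigma+\tau)$), whereas the paper uses it once at the end plus the easier flat-pullback compatibilities for $\mathrm{pr}_1$ and the smooth map $m$. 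You correctly identify that compatibility (justified by the splitting $N_{(e,e)}\cong N_e\oplus N_e$) and the functoriality $(\Phi\circ(\sigma,\tau))^!=(\sigma,\tau)^!\circ\Phi^*$ as the only substantive inputs; the paper asserts its analogous Gysin-compatibility steps at the same level of detail, so your argument is no less rigorous than the published one, and arguably more symmetric.
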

\begin{proof}
Note that the set-theoretic version of \ref{equality:locus} is trivial. We give only the argument for \ref{equality:cycle}; that for \ref{equality:locus} is similar but easier. In the diagram 
\begin{equation*}
\begin{tikzcd}
G \arrow[r, "i"] & G \times_S G \arrow[d] \arrow[r, "m"] & G \\
& S \arrow[u,bend left, "{(\sigma,\tau)}"] \arrow[ur, swap, "\sigma + \tau"] &\\
\end{tikzcd}
\end{equation*}
where $i = (e\circ \pi, id)$, we have equalities of cycles supported on $L_\sigma \cap L_\tau$:
%\begin{equation}
%\begin{split}
%\sigma^*e \cap (\sigma + \tau)^*e & = (\sigma, \tau)^*i(G) \cap (\sigma, \tau)^*(m^*e)\\
% & = (\sigma, \tau)^*\Big(  i(G) \cap m^* e \Big)\\
% & = (\sigma, \tau)^*\Big(  i(e)\Big)\\
% & = \sigma^* e \cap \tau^* e
%\end{split}
%\end{equation}
%as required. For the statement on the level of cycles the argument is similar: 
\begin{equation*}
\begin{split}
\sigma^*[e] \cdot (\sigma + \tau)^*[e] & = (\sigma, \tau)^*i_*[G] \cdot (\sigma, \tau)^*(m^*[e])\\
 & = (\sigma, \tau)^*\Big(  i_*[G] \cdot m^*[e] \Big)\\
\text{ (projection formula)} & = (\sigma, \tau)^*i_*\Big( [G] \cdot i^*m^*[e] \Big) \\
 & = (\sigma, \tau)^*i_*i^*m^*[e]\\ 
 & = (\sigma, \tau)^*[(e,e)]\\
 & = \sigma^*[e] \cdot  \tau^*[e]. 
\end{split}
\end{equation*}
\end{proof}

A natural application of this lemma is to the double ramification cycle. Here the base $S$ is given by $\ca M_{g,n}$, and $G = \ca J_{g,n}$ is the jacobian of the universal curve. Then for any vector of integers $\ul a = (a_1, \dots, a_n, k)$ with $\sum_i a_i = k(2g-2)$ we have the section $\sigma_{\ul a} = [\omega^{\otimes k}\left(-\sum_i a_i x_i\right)]$ of $\ca J_{g,n}$, and the double ramification cycle on $\ca M_{g,n}$ is given by pulling back the unit section along $\sigma_{\ul a}$, i.e. 
\begin{equation*}
\on{DRC}_{\ul a} = C_{\sigma_{\ul a}}
\end{equation*}
in the notation of \ref{lemma:mult}. We thus obtain from \ref{lemma:mult} the relation 
\begin{equation}\label{eq:DRC_relation}
\on{DRC}_{\ul a} \cdot \on{DRC}_{\ul b} = \on{DRC}_{\ul a} \cdot \on{DRC}_{\ul a+ \ul b}
\end{equation}
 in $\on{CH}_{\bb Q}(\ca M_{g,n})$, after pushing forward from the intersection of the corresponding double ramification loci. However, this relation is uninteresting as both sides vanish, since the degree $2g$-part of the tautological ring vanishes here. 
 
Over the locus of compact type (or more generally \emph{treelike}) curves, the double ramification cycle can be defined in the same way, and the same proof shows that multiplicativity holds here; more details are given in \ref{sec:treelike}. Moreover, on these loci the relation is not vacuous, as shown in \ref{sec:example}. However, the same section shows that this multiplicativity relation does not extend over the whole of $\Mbar_{g,n}$; in the next section, we introduce the b-Chow ring, and in the section after we extend the double ramification cycle to the b-Chow ring and show that multiplicativity does hold there.

\section{The $b$-Chow ring}\label{sec:b_Chow}

The group of $b$-divisors on a scheme $X$ was introduced by Shokurov \cite{Shokurov19963-fold-log-mode}, \cite{Shokurov2003Prelimiting-fli} as the limit of the divisor groups of all blowups of $X$, with transition maps given by proper pushforward. One can define a (large) $b$-Chow group in the same way, as the limit over all blowups with transition maps given by pushforward, but note that it does not have a natural ring structure. The small $b$-Chow group is defined below as the \emph{colimit} of Chow groups over smooth blowups, with transition maps given by pullback of cycles. It is naturally a subgroup of the large $b$-Chow group, and importantly it carries a natural ring structure (described below), so we refer to it as the (small) $b$-Chow \emph{ring}. %We write $\on{CH}_{\bb Q}^b(\Mbar_{g,n})$ for the small $b$-Chow ring of $\Mbar_{g,n}$. 

Let $S$ be an irreducible noetherian algebraic stack. We write $\on{Bl}(S)$ for the category whose objects are proper birational maps $X \to S$, relatively representable by algebraic spaces, and with $X$ regular, and where the morphisms are morphisms over $S$. Taking Chow rings and pullbacks gives a new category $\on{CH}_{\bb Q}(\on{Bl}(S))$, whose objects are the $\bb Q$-Chow rings of the objects of $\on{Bl}(S)$, and where morphisms are given by pullbacks (which makes sense because everything is regular). We define the $b$-Chow ring of $S$ to be the colimit of this system of rings: 
\begin{equation*}
\on{bCH}_{\bb Q}(S) = \on{colim}\on{CH}_{\bb Q}(\on{Bl}(S)). 
\end{equation*}
Since the category $\on{CH}_{\bb Q}(\on{Bl}(S))$ is filtered (\cite[\href{http://stacks.math.columbia.edu/tag/04AX}{Tag 04AX}]{stacks-project}) we can give a much more concrete description on the level of sets: 
\begin{equation*}
\on{bCH}_{\bb Q}(S) = \left(\bigsqcup_{X \in \on{Bl}(S)} \on{CH}_{\bb Q}(X) \right)/\sim
\end{equation*}
where for elements $x \in  \on{CH}_{\bb Q}(X)$ and $y \in  \on{CH}_{\bb Q}(Y)$, we say $x \sim y$ if and only if there exists $Z \in \on{Bl}(S)$ and maps $f\colon Z \to X$, $g \colon Z \to Y$, with 
\begin{equation*}
f^*x = g^*y. 
\end{equation*}
To multiply elements $x$ and $y$, we again find a $Z \in \on{Bl}(S)$ mapping to both $X$ and $Y$, and form the intersection product after pullback to this $Z$. 

\section{Multiplicativity of the double ramification cycle in the $b$-Chow ring}\label{sec:b_DRC}

Given $\ul a = (a_1, \dots, a_n, k)$ with $\sum_i a_i = k(2g-2)$, we first define the extension of the corresponding double ramification cycle to $\on{bDRC}_{\ul a}$ in $\on{CH}_{\bb Q}(\Mbar_{g,n})$. Taking the standard extension to the Chow ring of $\Mbar_{g,n}$ and pulling back is not the right approach --- for example, the multiplicativity relation will fail. Instead we look at the construction in \ref{sec:extending_DRC}. Recall that for a modification $X \to \Mbar_{g,n}$ we write $\mathring X \hra X$ for the largest open to which $\sigma_{\ul a}$ extends. We define $\on{DRL}_{\ul a}^X \tra \mathring X$ by pulling back the unit section scheme-theoretically, and $\on{DRC}_{\ul a}^X$ as a cycle class on $\on{DRL}_{\ul a}^X$ by pulling back in Chow. 

Let $X$ be regular and such that $\on{DRL}_{\ul a}^X$ is proper over $\Mbar_{g,n}$. Write $i\colon \on{DRL}_{\ul a}^X \to X$ for the inclusion, which is a closed immersion. Then we define $\on{DRC}^X_{\ul a} = i_*\on{DRC}_{\ul a}^X$ as an element of $\on{bCH}_{\bb Q}(\Mbar_{g,n})$. Recall from \ref{sec:extending_DRC} that the $X$ with $\on{DRL}_{\ul a}^X$ proper over $\Mbar_{g,n}$ form a cofinal system among all modifications $X$, yielding a net of $\on{DRC}^X_{\ul a}$ in $\on{bCH}_{\bb Q}(\Mbar_{g,n})$. 

\begin{lemma}
The net of $\on{DRC}^X_{\ul a}$ in $\on{bCH}_{\bb Q}(\Mbar_{g,n})$ is eventually constant. 
\end{lemma}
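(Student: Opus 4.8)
The plan is to prove the stronger statement that the net is \emph{compatible under pullback} beyond a certain stage, which immediately yields eventual constancy in the colimit. Concretely, the full subcategory of $\on{Bl}(\Mbar_{g,n})$ consisting of those $X$ with $\on{DRL}_{\ul a}^X$ proper over $\Mbar_{g,n}$ is filtered and cofinal (by \ref{main_theorem:proper}), so an element of $\on{bCH}_{\bb Q}(\Mbar_{g,n})$ represented on this system is eventually constant precisely when there is some $X_0$ such that for every $p\colon X \to X_0$ in the system one has $p^*\on{DRC}^{X_0}_{\ul a} = \on{DRC}^{X}_{\ul a}$ in $\on{CH}_{\bb Q}(X)$. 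The whole problem thus reduces to establishing this pullback compatibility for $X$ sufficiently large.

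First I would compare the two classes away from the locus where the Abel--Jacobi section fails to extend. Since $\on{DRL}_{\ul a}^{X_0}$ is proper over $\Mbar_{g,n}$ and $\Mbar_{g,n}$ is separated, the inclusion $\on{DRL}_{\ul a}^{X_0} \tra X_0$ is a closed immersion with image inside the open $\mathring X_0$. Writing $V = p^{-1}(\mathring X_0) \subseteq \mathring X$, on $V$ we have $\sigma_{\ul a}^{X} = \sigma_{\ul a}^{X_0}\circ p$, and as $p$ is a morphism of regular stacks, hence lci, the refined Gysin pullback $(-)^*[e]$ commutes with $p^*$. Combined with the compatibility of proper pushforward with lci pullback across the Cartesian square cut out by $\on{DRL}_{\ul a}^{X_0}\tra X_0$ and $p$ (note $p^{-1}\on{DRL}_{\ul a}^{X_0}\subseteq V$, since $\on{DRL}_{\ul a}^{X_0}\subseteq \mathring X_0$), this shows that $p^*\on{DRC}^{X_0}_{\ul a}$ and $\on{DRC}^{X}_{\ul a}$ agree up to a cycle supported on the part of $\on{DRL}_{\ul a}^{X}$ lying over the non-extension locus $X_0\setminus \mathring X_0$.

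The main obstacle is then to show that for $X_0$ chosen large enough this boundary discrepancy vanishes: that further resolving $\sigma_{\ul a}$ over $X_0\setminus\mathring X_0$ produces no new double ramification points of the expected codimension $g$. This is the genuinely geometric input, and here I would appeal to the structural results behind \ref{main_theorem:proper} and \ref{main_theorem:limit} in \cite{Holmes2017Extending-the-d}: properness of the double ramification locus forces the extended section to meet the unit section only along the closure of the interior double ramification locus, and since $\Mbar_{g,n}$ has only finitely many boundary strata there are only finitely many combinatorial sources of such boundary components, so after blowing up enough to resolve all of them no further contributions appear. As a consistency check, the projection formula gives ${\pi_{X}}_* p^* = {\pi_{X_0}}_*$, so \ref{main_theorem:limit} already shows the discrepancy cycle pushes forward to zero on $\Mbar_{g,n}$; the delicate point, and the crux of the argument, is to upgrade this to vanishing in $\on{CH}_{\bb Q}(X)$ itself (equivalently, stabilization in the $b$-Chow ring) rather than merely after pushforward.
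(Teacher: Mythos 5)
There is a genuine gap, and you have in fact located it yourself: your final paragraph concedes that the ``crux of the argument'' --- vanishing of the boundary discrepancy in $\on{CH}_{\bb Q}(X)$ itself rather than after pushforward --- is left unproved, and the heuristic you offer in its place does not work. The appeal to ``finitely many boundary strata'' of $\Mbar_{g,n}$ gives no bound on the tower of modifications, since every blowup creates new strata over which the section may again fail to extend; and properness of $\on{DRL}_{\ul a}^{X_0}$ over $\Mbar_{g,n}$ does not by itself prevent $\on{DRL}_{\ul a}^{X}$ from acquiring components over $X_0 \setminus \mathring X_0$ for a badly chosen $X_0$. So the proposal correctly reduces the lemma to the pullback compatibility $p^*\on{DRC}^{X_0}_{\ul a} = \on{DRC}^{X}_{\ul a}$ for all $p\colon X \to X_0$, and correctly handles the comparison over $p^{-1}(\mathring X_0)$ via compatibility of Gysin pullbacks, but the genuinely geometric input needed to kill the discrepancy is missing.

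The paper closes exactly this gap by invoking the universal stack $\ca M_{g,n}^\lozenge$ of \cite{Holmes2017Extending-the-d} (its proof is a simpler version of that of theorem 6.3 of loc.\ cit.): one takes $X_0$ to be a regular compactification of $\ca M_{g,n}^\lozenge$. The point is the N\'eron-type universal property of $\ca M_{g,n}^\lozenge$: the section $\sigma_{\ul a}$ extends over an open of a modification if and only if that open factors through $\ca M_{g,n}^\lozenge$ over $\Mbar_{g,n}$. With this choice one gets $\mathring X_0 = \ca M_{g,n}^\lozenge$, and for any further modification $p \colon X \to X_0$ the same universal property forces $\mathring X = p^{-1}(\ca M_{g,n}^\lozenge)$ on the nose, with $\sigma_{\ul a}^X$ the pullback of the section over $\ca M_{g,n}^\lozenge$. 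Hence $\on{DRL}_{\ul a}^{X}$ is literally the scheme-theoretic pullback of the proper locus $\on{DRL}_{\ul a}^{X_0}$, no components over $X_0\setminus \mathring X_0$ can appear, and your Gysin-compatibility argument then already yields $p^*\on{DRC}^{X_0}_{\ul a} = \on{DRC}^{X}_{\ul a}$ with no discrepancy term at all. (A minor further quibble: your opening ``precisely when'' overstates the reduction --- equality in a filtered colimit only means the classes are equalized after some further pullback, so on-the-nose pullback compatibility from a fixed $X_0$ is sufficient but not formally necessary; this does not affect the structure of the argument.)
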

\begin{proof}
This argument is a simpler version of the proof of \cite[theorem 6.3]{Holmes2017Extending-the-d}. The limiting value can be obtained by taking $X$ a regular compactification of the stack $\ca M_{g,n}^\lozenge$ constructed in [loc.cit.]. 
\end{proof}

\begin{definition}
We define $\on{bDRC}_{\ul a}$ in $\on{bCH}_{\bb Q}(\Mbar_{g,n})$ as the limit of the above net. 
\end{definition}

\Cref{thm:pushforward} now follows formally from \cite[\S 6]{Holmes2017Extending-the-d}, so it remains to prove \ref{thm:mult_intro}. 

\begin{theorem}\label{thm:main}
Choose $\ul a = (a_1, \dots, a_n, k)$ with $\sum_i a_i = k(2g-2)$, and similarly choose $\ul b = (b_1, \dots, b_n, k')$. Then in $\on{bCH}_{\bb Q}(\Mbar_{g,n})$ we have 
\begin{equation}\label{eq:rel_thm}
\on{bDRC}_{\ul a} \cdot \on{bDRC}_{\ul b} = \on{bDRC}_{\ul a} \cdot \on{bDRC}_{\ul a+ \ul b}. 
\end{equation}
\end{theorem}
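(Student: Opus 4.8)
The plan is to reduce the statement to the Multiplicativity lemma \ref{lemma:mult} applied on a suitable open substack of a \emph{single} regular modification. Since the category of regular modifications is filtered and each of the three nets $\on{DRC}^X_{\ul a}$, $\on{DRC}^X_{\ul b}$, $\on{DRC}^X_{\ul a + \ul b}$ is eventually constant, I would first fix one regular modification $f \colon X \to \Mbar_{g,n}$ that simultaneously computes all three limits $\on{bDRC}_{\ul a}$, $\on{bDRC}_{\ul b}$, $\on{bDRC}_{\ul a + \ul b}$; in particular all three double ramification loci $\on{DRL}^X_{\ul a}$, $\on{DRL}^X_{\ul b}$, $\on{DRL}^X_{\ul a + \ul b}$ are proper over $\Mbar_{g,n}$, hence (being proper monomorphisms into the separated $X$) closed substacks of $X$, as already noted in \ref{sec:b_DRC}. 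Because $X$ is regular, both sides of (\oref{eq:rel_thm}) can then be computed as honest intersection products in $\on{CH}_{\bb Q}(X)$ of the pushed-forward representatives $(i_{\ul a})_*\on{DRC}^X_{\ul a}$, and similarly for $\ul b$ and $\ul a + \ul b$.

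Next let $U \hra X$ be the open substack on which both $\sigma_{\ul a}$ and $\sigma_{\ul b}$ extend to morphisms to $\ca J_{g,n}$, so that $U = \mathring X_{\ul a} \cap \mathring X_{\ul b}$. Because $\sigma_{\ul a + \ul b} = \sigma_{\ul a} + \sigma_{\ul b}$ as sections of the group scheme $\ca J_{g,n}$, the section $\sigma_{\ul a + \ul b}$ also extends over $U$, so all three sections are simultaneously defined there. The crucial observation is that the relevant products already live inside $U$: a point of $\on{DRL}^X_{\ul a} \times_X \on{DRL}^X_{\ul b}$ maps into $\mathring X_{\ul a} \cap \mathring X_{\ul b} = U$, so $\on{DRL}^X_{\ul a} \times_X \on{DRL}^X_{\ul b} \sub U$ even scheme-theoretically; the same reasoning gives $\on{DRL}^X_{\ul a} \times_X \on{DRL}^X_{\ul a + \ul b} \sub U$.

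With this in hand I would invoke the locality of the refined intersection product on the regular stack $X$: the product of a cycle supported on the closed substack $\on{DRL}^X_{\ul a}$ with one supported on $\on{DRL}^X_{\ul b}$ refines canonically to a class supported on $\on{DRL}^X_{\ul a} \times_X \on{DRL}^X_{\ul b}$, and since this intersection lies in $U$ the refined product may be computed entirely inside $U$. There the restrictions of $\on{DRC}^X_{\ul a}$ and $\on{DRC}^X_{\ul b}$ are by construction the Gysin pullbacks $C_{\sigma_{\ul a}} = \sigma_{\ul a}^*[e]$ and $C_{\sigma_{\ul b}} = \sigma_{\ul b}^*[e]$ of the unit section of $\ca J_{g,n}|_U$. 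Applying \ref{lemma:mult} on $U$ (with $S = U$, $G = \ca J_{g,n}|_U$, $\sigma = \sigma_{\ul a}$, $\tau = \sigma_{\ul b}$) then yields
\begin{equation*}
C_{\sigma_{\ul a}} \cdot C_{\sigma_{\ul b}} = C_{\sigma_{\ul a}} \cdot C_{\sigma_{\ul a + \ul b}}
\end{equation*}
as cycles supported on $L_{\sigma_{\ul a}} \cap L_{\sigma_{\ul b}} = L_{\sigma_{\ul a}} \cap L_{\sigma_{\ul a + \ul b}}$ inside $U$. Pushing these equal classes forward to $X$ and then to $\on{bCH}_{\bb Q}(\Mbar_{g,n})$ gives exactly (\oref{eq:rel_thm}).

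The main obstacle I anticipate is making the bridge in the previous paragraph fully rigorous: one must verify that the $b$-Chow intersection product on $X$ of the two pushforward cycles genuinely coincides with the group-scheme Gysin product appearing in \ref{lemma:mult}. This requires the compatibility of the refined Gysin maps with restriction to the open $U$, in the regular algebraic-stack setting and with $\bb Q$-coefficients, together with the scheme-theoretic (not merely set-theoretic) containment of the fibre products in $U$ established above. Once locality of the intersection product is in place, the remaining steps are the routine compatibilities of pullback, pushforward and the projection formula already exploited in the proof of \ref{lemma:mult}.
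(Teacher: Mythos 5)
Your argument is correct and follows essentially the same route as the paper's proof: pass to a single regular modification computing all the relevant limiting classes (the paper equivalently takes a $Z$ dominating separate modifications for $\ul a$ and $\ul b$), show via the group structure that both pairwise intersections of double ramification loci land in the open $U = \mathring X_{\ul a} \cap \mathring X_{\ul b}$ where all three sections extend, and apply \ref{lemma:mult} to $\ca J_{g,n}$ over that open. The locality of the refined intersection product that you flag as the main technical point is precisely what the paper uses implicitly in its final step, so your write-up is if anything slightly more explicit about the same argument.
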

\begin{proof}
Choose $X \to \Mbar_{g,n}$ so that $\on{DRL}_{\ul  a}^X \to \Mbar_{g,n}$ is proper and so that $\on{DRC}_{\ul a}^X$ equals the limiting value $\on{bDRC}_{\ul a}$ in $\on{bCH}_{\bb Q}(\Mbar_{g,n})$. Choose a corresponding $Y$ for $\ul b$, and let $Z$ be a regular modification admitting maps to $X$ and $Y$ over $\Mbar_{g,n}$. It suffices to check \ref{eq:rel_thm} in the Chow ring of $Z$. 

Let $\mathring Z_a \hra Z$ be the largest open where $\sigma_{\ul a}$ extends, and similarly define $\mathring Z_{\ul b}$ and $\mathring Z_{\ul a + \ul b}$. Writing $\mathring Z = \mathring Z_{\ul a}  \cap \mathring Z_{\ul b}$, we see that $\sigma_{\ul a + \ul b}$ is also defined on $\mathring Z$; it is given by $\sigma_{\ul a} + \sigma_{\ul b}$. Hence we have 
\begin{equation*}
\mathring Z = \mathring Z_{\ul a}  \cap \mathring Z_{\ul b} \sub \mathring Z_{\ul a + \ul b}, 
\end{equation*}
and a similar argument shows 
\begin{equation}\label{eq:inclusion}
\mathring Z_{\ul a}  \cap \mathring Z_{\ul a + \ul b} \sub \mathring Z_{\ul b}. 
\end{equation}
Now it is clear that% (all loci taken with respect to $Z$)
\begin{equation*}
\on{DRL}^Z_{\ul a} \cap \on{DRL}^Z_{\ul b} \sub  \mathring Z_{\ul a}  \cap \mathring Z_{\ul b} = \mathring Z, 
\end{equation*}
and similarly we have
\begin{equation*}
\on{DRL}^Z_{\ul a} \cap \on{DRL}^Z_{\ul a + \ul b} \sub  \mathring Z_{\ul a}  \cap \mathring Z_{\ul a + \ul b}  \sub \mathring Z_{\ul a}  \cap \mathring Z_{\ul b} = \mathring Z, 
\end{equation*}
where the middle relation comes from \ref{eq:inclusion}. The theorem now follows directly from \ref{lemma:mult} applied to the universal jacobian $\ca J_{g,n}$ pulled back to $\mathring Z$. 
\end{proof}

\section{Relation to the cycle ${\operatorname P}_g^{d,k}({A})$}%$\mathrm{P}_g^{d,k}({\underline A})$}
\label{sec:relation_to_P}
In this and the next section we consider the connection between the classes $\overline{\on{DRC}}_{\ul a} \in \on{CH}_{\bb Q}^g(\Mbar_{g,n})$ and the tautological cycle class $\mathrm{P}_g^{d,k}({\ul A})$ introduced by the second author in \cite{Janda2016Double-ramifica}. For this we first recall some notation from [loc.cit.].

%As usual, we fix a genus $g\geq 0$ and a number $n \geq 0$ of markings with $2g-2+n >0$. Also 
Fix an integer $k \geq 0$ and an integer vector ${\ul A}=(A_1, \ldots, A_n)$ with $\sum_{i=1}^n A_i = k (2g-2+n)$. Note that there is a natural bijection of such vectors ${\ul A}$ and vectors ${\ul a}=(a_1, \ldots, a_n)$ with $\sum_{i=1}^n a_i = k (2g-2)$ by setting
\[(A_1, \ldots, A_n) = (a_1 +k , \ldots, a_n+k);\]
we will use this identification in what follows.

Fix also a degree $d \geq 0$, then given this data, in \cite[Section 1.1]{Janda2016Double-ramifica} a tautological cycle class 
\[\mathrm{P}_g^{d,k}({\ul A}) \in \on{CH}_{\bb Q}^d(\Mbar_{g,n})\]
is defined as an explicit sum in terms of decorated boundary strata. The main result of \cite{Janda2016Double-ramifica} is that for $k=0, d=g$ this formula computes the double ramification cycle corresponding to the partition ${\ul A}$. More precisely, they prove
\[\mathrm{DR}_g({\ul A}) = 2^{-g} \mathrm{P}_g^{g,0}({\ul A}),\]
where $\mathrm{DR}_g({\ul A})$ is the double ramification cycle associated to $\ul A$ via the Gromov-Witten theory of `rubber $\bb P^1$'. 

From \cite[conjecture 1.4]{Holmes2017Extending-the-d} we recall 
\begin{conjecture}\label{conj:pixton_eq}
For all $k$ we have 
\begin{equation*}
\overline{\on{DRC}}_{\ul a} =  2^{-g}\mathrm{P}_g^{g,k}({\ul A})
\end{equation*}
as elements of $\on{CH}_{\bb Q}^g(\Mbar_{g,n})$.
\end{conjecture}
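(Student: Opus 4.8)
The plan is to deduce the full statement of \ref{conj:pixton_eq} from the known case $k=0$ (the main theorem of \cite{Janda2016Double-ramifica}, which together with \ref{main_theorem:limit} and the identification of $\overline{\on{DRC}}_{\ul a}$ with the Li--Graber--Vakil class gives $\overline{\on{DRC}}_{\ul a}=2^{-g}\mathrm{P}_g^{g,0}(\ul A)$) by promoting both sides to families of classes that vary algebraically in the twisting datum $\ul a=(a_1,\dots,a_n,k)$. Concretely I would work on the universal Jacobian $\ca J_{g,n}$, respectively on a regular compactification of the universal Picard stack, where the section $\sigma_{\ul a}=[\omega^{\otimes k}(-\sum_i a_i x_i)]$ and the locus $\on{DRL}^X_{\ul a}$ of \ref{sec:extending_DRC} are cut out by a single Abel--Jacobi type construction. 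The first goal is to show that both $\overline{\on{DRC}}_{\ul a}$ and $\mathrm{P}_g^{g,k}(\ul A)$ are \emph{polynomial} of degree $2g$ in $(a_1,\dots,a_n,k)$ with values in $\on{CH}^g_{\bb Q}(\Mbar_{g,n})$: for Pixton's cycle this is built into the defining formula of \cite{Janda2016Double-ramifica}, and for $\overline{\on{DRC}}_{\ul a}$ it should follow from the explicit resolution underlying \ref{main_theorem:limit}, since the blowup geometry depends only on the combinatorial chamber of $\ul a$ and not on its numerical value within that chamber.

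Granting polynomiality, it suffices to determine the class along enough slices to pin down the $k$-dependence. The twist by $\omega^{\otimes k}$ is a translation on $\ca J_{g,n}$ by the section $[\omega^{\otimes k}]$, so I would study how $C_{\sigma_{\ul a}}$ transforms under this translation and compare with the transformation of Pixton's formula under $A_i\mapsto A_i+k$. Over the compact-type locus this comparison is exactly \ref{prop:pixton_eq_ct}, which already yields the equality there for every $k$; the genuine content therefore lies in the contributions supported on the non-compact-type boundary, i.e.\ the terms in Pixton's sum indexed by stable graphs whose dual graph has positive first Betti number.

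To control those terms I would argue by induction on $(g,n)$ using the compatibility of both sides with the boundary gluing maps. Pixton's class restricts to products of lower classes under pullback along gluing morphisms, and the same splitting holds for $\overline{\on{DRC}}_{\ul a}$ by the corresponding property of the Abel--Jacobi construction; the inductive hypothesis then settles every stratum arising from a separating node or from a loop that becomes compact-type after normalisation. This reduces the statement to the new information carried by a single non-separating node, where the Jacobian acquires a toric part and $\sigma_{\ul a}$ fails to extend --- precisely the situation that forced us into the $b$-Chow ring. Here I would exploit the multiplicativity of \ref{thm:main}: the relation $\on{bDRC}_{\ul a}\cdot\on{bDRC}_{\ul b}=\on{bDRC}_{\ul a}\cdot\on{bDRC}_{\ul a+\ul b}$ holds for \emph{all} $(\ul a,\ul b)$, while on the Pixton side the analogous relation is established combinatorially on treelike curves in \ref{prop:pixton_mult}; matching these two families of relations against the base cases should over-determine the remaining loop contributions and force the desired equality.

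The hard part will be turning the $b$-Chow relation into usable constraints in $\on{CH}^g_{\bb Q}(\Mbar_{g,n})$. As stressed in the introduction, the pushforward $\on{bCH}_{\bb Q}(\Mbar_{g,n})\to\on{CH}_{\bb Q}(\Mbar_{g,n})$ is not a ring homomorphism, so \ref{thm:main} does not directly impose any relation on the ordinary codimension-$g$ classes; one must instead extract honest identities by pushing forward along specific modifications $X$ on which $\on{DRL}^X_{\ul a}$ becomes proper, while tracking the excess-intersection corrections introduced by the blowups. Controlling these corrections uniformly, and showing that they match the $r$-regularised, polynomial-in-$r$ contributions of the loop-decorated graphs in Pixton's formula, is the genuine obstruction; it is the reason the conjecture remains open for $k>0$ despite the compact-type case \ref{prop:pixton_eq_ct} and the combinatorial multiplicativity \ref{prop:pixton_mult}.
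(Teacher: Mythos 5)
This statement is labelled a \emph{conjecture} in the paper, and the paper contains no proof of it: what the paper actually establishes is the case $k=0$ (\ref{prop:pixton_eq_k_zero}, via \cite{Janda2016Double-ramifica}), the equality in cohomology over the compact-type locus for all $k$ (\ref{prop:pixton_eq_ct}), and the consistency check that both sides satisfy the same multiplicativity relation on treelike curves (\ref{lem:tl}, \ref{prop:pixton_mult}), which is offered only as \emph{evidence}. So any complete argument you gave would be new mathematics, and your proposal does not get there; indeed you concede in your final paragraph that the decisive step is ``the genuine obstruction'' --- which is an accurate self-assessment, not a proof.

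Concretely, the gaps are these. First, polynomiality of $\overline{\on{DRC}}_{\ul a}$ in $(a_1,\dots,a_n,k)$ is nowhere established in the paper and does not follow from the remark that the blowup geometry is constant on combinatorial chambers of $\ul a$: chamber-wise polynomiality plus agreement across walls is essentially as hard as the conjecture itself, and nothing in \ref{main_theorem:limit} supplies it. Second, even granting polynomiality of both sides, your available slices underdetermine the class: knowing equality at $k=0$ for all $\ul a$ and on $\ca M_{g,n}^{ct}$ for all $k$ only forces the difference to be $k$ times a polynomial family of codimension-$g$ classes supported on the non-compact-type boundary, and nothing forces that family to vanish --- this is exactly where the unknown content of the conjecture lives. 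Third, the attempt to extract constraints from \ref{thm:main} cannot work as stated: since the pushforward $\on{bCH}_{\bb Q}(\Mbar_{g,n})\to\on{CH}_{\bb Q}(\Mbar_{g,n})$ is not a ring homomorphism, the $b$-Chow relation imposes no identity among the ordinary classes; and even if one had multiplicativity for both $\overline{\on{DRC}}$ and $2^{-g}\mathrm{P}_g^{g,k}$ in $\on{CH}_{\bb Q}(\Mbar_{g,n})$ (which \ref{sec:example} shows fails for $\overline{\on{DRC}}$), relations among \emph{products} of classes cannot ``over-determine'' the classes themselves --- two distinct families can satisfy identical quadratic relations. Your sketch is a reasonable map of the evidence the paper assembles, but each of its three load-bearing steps is either unproved or structurally incapable of yielding the conclusion.
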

\begin{remark}\label{prop:pixton_eq_k_zero}
\Cref{conj:pixton_eq} holds when $k=0$. Indeed, when $k=0$ we know by \cite[theorem 1.3]{Holmes2017Extending-the-d} that $\overline{\on{DRC}}_{\ul a} = \mathrm{DR}_g({\ul A})$, which combined with the main result of \cite{Janda2016Double-ramifica} yields the result. 
\end{remark}
We now show that \ref{conj:pixton_eq} holds in cohomology for all $k$ if we restrict to the locus of curves of compact type. 
\begin{proposition}\label{prop:pixton_eq_ct} \label{pro:comparison}
On the locus $\ca M_{g,n}^{ct}$ of compact type curves we have an equality
  \begin{equation} \label{eqn:DRC_Pixton}
  \overline{\on{DRC}}_{\ul a}  = 2^{-g}\mathrm{P}_g^{g,k}({\ul A})  \in \on{H}_{\bb Q}^{2g}(\ca M_{g,n}^{ct}).
 \end{equation}
\end{proposition}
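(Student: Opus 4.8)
The plan is to evaluate both sides of \ref{eqn:DRC_Pixton} explicitly as polynomials in $(\ul a, k)$ with values in $\on{H}_{\bb Q}^{2g}(\ca M_{g,n}^{ct})$ and to compare them, reducing the terms independent of the twist to the already-known case $k=0$.

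First I would note that over the compact-type locus the construction of \ref{sec:extending_DRC} degenerates to a naive pullback. The universal jacobian $\ca J_{g,n}$ is an \emph{abelian} scheme over $\ca M_{g,n}^{ct}$ rather than merely semiabelian, so the section $\sigma_{\ul a} = [\omega^{\otimes k}(-\sum_i a_i x_i)]$ extends over all of $\ca M_{g,n}^{ct}$ with no blowup required; in the notation of \ref{sec:extending_DRC} we have $\ca M_{g,n}^{ct} \sub \mathring X$ already for $X = \Mbar_{g,n}$. Consequently $\on{DRL}_{\ul a}$ meets $\ca M_{g,n}^{ct}$ in a proper locus and, compatibly with \ref{thm:pushforward}, the class $\overline{\on{DRC}}_{\ul a}$ restricts there to the plain pullback $\sigma_{\ul a}^*[e]$. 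Because $\ca J_{g,n}$ is principally polarized and we work in cohomology, Hain's computation of the pullback of the zero section of a family of principally polarized abelian varieties applies and yields Hain's formula
\begin{equation*}
\overline{\on{DRC}}_{\ul a}|_{ct} = \frac{1}{g!}\,\eta_{\ul a}^{\,g}, \qquad \eta_{\ul a} = \sigma_{\ul a}^*\Theta \in \on{H}_{\bb Q}^{2}(\ca M_{g,n}^{ct}).
\end{equation*}
The crucial feature is that $\sigma_{\ul a}$ is linear in $(\ul a, k)$ for the group law --- it is assembled from $[\omega]$ and the sections $[x_i]$ --- so $\eta_{\ul a}$ is a quadratic polynomial in $(\ul a, k)$ with coefficients in $\on{H}_{\bb Q}^{2}$, and the left-hand side of \ref{eqn:DRC_Pixton} is an explicit polynomial of degree $2g$ in $(\ul a, k)$, valid for \emph{every} twist $k$.

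Next I would restrict Pixton's formula to $\ca M_{g,n}^{ct}$. Each term indexed by a stable graph carrying a nontrivial cycle is supported on the non-compact-type boundary and so vanishes after restriction; only trees $\Gamma$ survive, and for these $h^1(\Gamma) = 0$, so the auxiliary modulus $r$ drops out and the remaining sum is manifestly polynomial in $(\ul a, k)$. Together with the polynomiality of $\mathrm{P}_g^{g,k}(\ul A)$ in $\ul A$, this presents the right-hand side of \ref{eqn:DRC_Pixton} as a second explicit polynomial of degree $2g$ in $(\ul a, k)$, so that proving \ref{eqn:DRC_Pixton} becomes a formal identity of polynomials valued in $\on{H}_{\bb Q}^{2g}(\ca M_{g,n}^{ct})$.

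The main obstacle is exactly this matching of Pixton's tree sum with Hain's $\frac{1}{g!}\eta_{\ul a}^{\,g}$ for arbitrary twist. By \ref{prop:pixton_eq_k_zero} the two polynomials already coincide on the hyperplane $k = 0$ (that is, $\sum_i a_i = 0$), which pins down every monomial not divisible by $k$; what is left is to reconcile the terms that are linear and quadratic in $k$. On the geometric side these are controlled by the $[\omega]$-contribution to $\eta_{\ul a}$, producing $\kappa_1$- and $\psi$-type classes paired with $k$, and on Pixton's side by the explicit $k$-dependence of the formula under the substitution $A_i = a_i + k$; the verification is therefore a finite, if somewhat intricate, comparison of coefficients rather than a conceptual difficulty. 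As an independent check I would use that both sides obey the multiplicativity relation on the compact-type locus --- the left-hand side by \ref{lemma:mult} applied to $\ca J_{g,n}$ over $\ca M_{g,n}^{ct}$ and the right-hand side by \ref{prop:pixton_mult} --- while keeping in mind that, because $\on{H}_{\bb Q}^{*}(\ca M_{g,n}^{ct})$ has zero divisors, this relation alone cannot substitute for the direct computation of the twist-dependent terms.
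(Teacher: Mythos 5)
Your opening moves coincide with the paper's proof: over $\ca M_{g,n}^{ct}$ the universal jacobian is an abelian scheme, so $\sigma_{\ul a}$ extends without blowups and $\overline{\on{DRC}}_{\ul a}$ restricts to $\sigma_{\ul a}^*[e]$; then $[e]=\theta^g/g!$ in cohomology gives $\overline{\on{DRC}}_{\ul a}|_{ct}=\frac{1}{g!}(\sigma_{\ul a}^*\theta)^g$ with $\sigma_{\ul a}^*\theta$ computed by Hain, and your observation that only tree graphs survive in Pixton's formula on compact type (so the modulus $r$ drops out) is also correct. The divergence, and the gap, is in the final comparison. The paper never compares $\frac{1}{g!}(\sigma_{\ul a}^*\theta)^g$ with $\mathrm{P}_g^{g,k}(\ul A)$ term by term in degree $g$. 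It first checks the \emph{codimension-one} identity $\sigma_{\ul a}^*\theta=\tfrac{1}{2}\mathrm{P}_g^{1,k}(\ul A)$ for all $k$ at once --- a genuinely elementary, finite comparison of Hain's divisor formula \ref{eqn:Hain} with the degree-one part of Pixton's formula --- and then invokes the exponential identity $\sum_{d\geq 0}\mathrm{P}_g^{d,k}(\ul A)=\exp(\mathrm{P}_g^{1,k}(\ul A))$ on compact type (equation \ref{eqn:expPixton}, a specialization of \ref{lemma:pixton_tl} proved via the Graber--Pandharipande multiplication formula), which yields $\mathrm{P}_g^{g,k}=\frac{1}{g!}(\mathrm{P}_g^{1,k})^g$ and closes the proof. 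Your proposal is missing exactly this exponential identity, and what you offer in its place does not substitute for it.

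Concretely, two things go wrong in your last paragraph. First, the bookkeeping: $\eta_{\ul a}=\sigma_{\ul a}^*\theta$ is quadratic in $(\ul a,k)$, so $\frac{1}{g!}\eta_{\ul a}^g$ contains monomials of $k$-degree up to $2g$; agreement at $k=0$ (via \ref{prop:pixton_eq_k_zero}) removes only the $k$-free part of the difference, leaving the entire $k$-divisible part --- not merely terms ``linear and quadratic in $k$'' --- and there is no second known specialization in $k$ to interpolate against, since higher $k$ is exactly the open case of \ref{conj:pixton_eq}. Second, and more seriously, the residual ``comparison of coefficients'' is not a routine finite check: the coefficients are cohomology classes supported on decorated boundary strata, and expanding the $g$-fold product $\eta_{\ul a}^g$ of divisor classes (involving all $\delta_{g'}^P$, $\psi_j$, $\kappa_1$) requires precisely the tautological multiplication formula of \cite{GraberPandharipande2003}, producing the sum over trees with repeated edges and $\psi$-decorations that \ref{lemma:pixton_tl} is designed to organize; moreover the identity must be established uniformly in $(g,n)$, so ``finite'' is only true for each fixed $(g,n)$ separately. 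The step you describe as ``intricate but not a conceptual difficulty'' is in fact the substantive content of the proof, and the paper's device for carrying it out --- the factorization $\mathrm{P}_g(\ul A)^{tl}=\exp(Q(\ul A))\Delta$ with $Q(\ul A)$ matching twice Hain's formula --- is the missing ingredient. (Your closing caveat, that multiplicativity alone cannot pin down the class because of zero divisors, is correct but does not repair the gap.)
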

\begin{proof}
The proof runs via the following chain of equalities in $\on{H}_{\bb Q}^{2g}(\ca M_{g,n}^{ct})$. 
 \begin{equation*}
  \overline{\on{DRC}}_{\ul a} \stackrel{a)}{=} \sigma_{\ul a}^* [e] \stackrel{b)}{=} \sigma_{\ul a}^* \frac{\theta^g}{g!} = \frac{1}{g!} \left( \sigma_{\ul a}^* \theta\right)^g \stackrel{c)}{=} \frac{1}{2^gg!} \left( P_g^{1,k}({\ul A}) \right)^g \stackrel{d)}{=} 2^{-g}\mathrm{P}_g^{g,k}({\ul A}).
 \end{equation*}
 We expect all of these equalities to hold in Chow, but for c) we only know it in cohomology. To start, equality a) follows from the definition of the double ramification cycle and the fact that the universal jacobian over $\ca M_{g,n}^{ct}$ is an abelian scheme, hence any sections over $\ca M_{g,n}$ are guaranteed to extend (uniquely) over $\ca M_{g,n}^{ct}$. Equality b) comes by pulling back the obvious relation on the universal abelian variety, which has already been observed by various authors, see for instance \cite{Grushevsky2012The-double-rami}. 

 Now the pullback (in cohomology) of the theta divisor under $\sigma_a$ has been computed by Hain in \cite{Hain2013Normal-function}. In standard notation for the tautological classes in $\ca M_{g,n}^{ct}$, Hain's result reads as follows: in $H^2(\mathcal{M}_{g,n}^{\mathrm{ct}})$ we have
 \begin{equation} \label{eqn:Hain} \sigma_{\ul a}^* \theta = -\frac{k^2}{2} \kappa_1 + \frac{1}{2}\sum_{j=1}^n (a_j+k)^2 \psi_j - \frac{1}{2} \sum_{g',P} (a_P - (2g'-1)k)^2 \delta_{g'}^P. \end{equation}
Here $P$ runs over subsets of $\{1,\ldots,n\}$, $a_P = \sum_{i \in P} a_i$, and the last sum should be interpreted as including each boundary divisor $\delta_{g'}^P=\delta_{g-g'}^{P^c}$ exactly once. Deducing equality c) then follows by an elementary verification using the definition of $\mathrm{P}_g^{1,k}({\ul A})$ from \cite[Section 1.1]{Janda2016Double-ramifica}. 

Finally, equality d) follows from the fact that on $\ca M_{g,n}^{ct}$ we have an equality of mixed-degree classes
 \begin{equation} \label{eqn:expPixton}
  \exp(\mathrm{P}_g^{1,k}({\ul A})) = \sum_{d \geq 0} \mathrm{P}_g^{d,k}({\ul A}) \in \on{CH}_{\bb Q}^*(\ca M_{g,n}^{ct}).
 \end{equation}
 This equality is a combinatorial statement; it is a specialization of the more general \ref{lemma:pixton_tl} which we will prove in the next section.
\end{proof}

\section{Restricting to treelike curves}\label{sec:treelike}

In this section we focus on the locus of \emph{treelike curves} --- these are stable curves whose graph is a tree with any number of self-loops attached (equivalently, all non-disconnecting edges are self-loops, or all cycles in the graph have length $\le 1$). We write $\ca M_{g,n}^{tl}$ for this locus; it is open in $\Mbar_{g,n}$, and clearly contains the compact-type locus $\ca M_{g,n}^{ct}$. 

Over the locus of treelike curves, the universal jacobian $\ca J_{g,n}$ is not proper (and its toric rank can be arbitrarily large). However, it is still a N\'eron model of its generic fibre in the sense of \cite{Holmes2014Neron-models-an} --- this follows easily from the main theorem of [loc.cit.], since all cycles in the graph have length at most 1. In particular, this implies that the section $\sigma_{\ul a}$ over $\ca M_{g,n}$ extends uniquely to $\ca J_{g,n}$ over the whole of $\ca M_{g,n}^{tl}$ (in fact, $\ca M_{g,n}^{tl}$ can be uniquely characterised as the largest open of $\Mbar_{g,n}$ such that every section of the universal jacobian over $\ca M_{g,n}$ extends). 

Recall that $\overline{\on{DRC}}_{\ul a}$ is the extension of the double ramification cycle to the Chow ring of $\Mbar_{g,n}$ as constructed in \ref{sec:extending_DRC}. %In the next lemma we abusively use the same notation for its restriction to the Chow ring of $\ca M_{g,n}^{tl}$. 
\begin{lemma}\label{lem:tl} 
In $\on{CH}_{\bb Q}^g(\ca M_{g,n}^{tl})$ we have the equality
\begin{equation}\label{eqn:DRC_pullback}
\overline{\on{DRC}}_{\ul a} = \sigma_{\ul a}^*[e]
\end{equation}
and in $\on{CH}_{\bb Q}^{2g}(\ca M_{g,n}^{tl})$ we have
\begin{equation}\label{eq:tl_mult}
\overline{\on{DRC}}_{\ul a} \cdot \overline{\on{DRC}}_{\ul b} = \overline{\on{DRC}}_{\ul a} \cdot \overline{\on{DRC}}_{\ul a + \ul b}. 
\end{equation}
%holds in $\on{CH}_{\bb Q}^g(\ca M_{g,n}^{tl})$. 
\end{lemma}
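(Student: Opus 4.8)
The plan is to establish the two equations separately, deducing the multiplicativity relation \eqref{eq:tl_mult} from the identification \eqref{eqn:DRC_pullback} together with the Multiplicativity Lemma \ref{lemma:mult}. For the first equation, the key observation is the one already flagged in the preamble to this section: over $\ca M_{g,n}^{tl}$ the universal jacobian $\ca J_{g,n}$ is a N\'eron model of its generic fibre, so the section $\sigma_{\ul a}$ defined over $\ca M_{g,n}$ extends uniquely to a section over the whole of $\ca M_{g,n}^{tl}$. Consequently $\sigma_{\ul a}^*[e]$ is already a well-defined cycle class on $\ca M_{g,n}^{tl}$, and I would argue that it agrees with the restriction of $\overline{\on{DRC}}_{\ul a}$. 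First I would recall the construction of $\overline{\on{DRC}}_{\ul a}$ from \ref{sec:extending_DRC}: for a suitable regular modification $X \to \Mbar_{g,n}$ with $\on{DRL}_{\ul a}^X$ proper, one pushes forward $\on{DRC}_{\ul a}^X = (\sigma_{\ul a}^X)^*[e]$. Since $\sigma_{\ul a}$ already extends over $\ca M_{g,n}^{tl}$ without blowing up, the restriction of such an $X$ over $\ca M_{g,n}^{tl}$ can be taken to be an isomorphism over this locus (the rational map to $\ca J_{g,n}$ is already a morphism there); the pushforward then restricts to $\sigma_{\ul a}^*[e]$, giving \eqref{eqn:DRC_pullback}.

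For the second equation I would apply \ref{lemma:mult} directly, taking $S = \ca M_{g,n}^{tl}$ and $G = \ca J_{g,n}$ restricted to this locus, which is a smooth separated group scheme over $S$ with unit section $e$. The sections $\sigma = \sigma_{\ul a}$ and $\tau = \sigma_{\ul b}$ both extend over $\ca M_{g,n}^{tl}$ by the N\'eron property, and moreover $\sigma_{\ul a} + \sigma_{\ul b} = \sigma_{\ul a + \ul b}$ as sections of the group scheme (this is the additivity of the assignment $\ul a \mapsto \sigma_{\ul a}$, which follows from $[\omega^{\otimes(k+k')}(-\sum_i (a_i+b_i) x_i)] = [\omega^{\otimes k}(-\sum_i a_i x_i)] + [\omega^{\otimes k'}(-\sum_i b_i x_i)]$ in $\on{Pic}$). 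Lemma \ref{lemma:mult} then yields
\begin{equation*}
C_{\sigma_{\ul a}} \cdot C_{\sigma_{\ul b}} = C_{\sigma_{\ul a}} \cdot C_{\sigma_{\ul a + \ul b}}
\end{equation*}
as cycles on $\ca M_{g,n}^{tl}$, and combining with \eqref{eqn:DRC_pullback} applied to each of $\ul a$, $\ul b$, and $\ul a + \ul b$ gives \eqref{eq:tl_mult}.

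The main obstacle I anticipate is the rigorous justification of \eqref{eqn:DRC_pullback}, specifically the compatibility between the blowup-and-pushforward construction of $\overline{\on{DRC}}_{\ul a}$ and the naive pullback $\sigma_{\ul a}^*[e]$ on the open locus where no blowup is needed. One must check that for a cofinal regular modification $X$ with $\on{DRL}_{\ul a}^X \to \Mbar_{g,n}$ proper, the map $X \to \Mbar_{g,n}$ is an isomorphism over $\ca M_{g,n}^{tl}$ (or at least that the pushforward of $\on{DRC}_{\ul a}^X$ restricts correctly), using that $\mathring X$ contains the preimage of $\ca M_{g,n}^{tl}$ precisely because $\sigma_{\ul a}$ extends there. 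This requires care with the interaction between restriction to an open substack and proper pushforward, but is ultimately a formal consequence of flat pullback and proper pushforward commuting in the relevant cartesian square. The additivity $\sigma_{\ul a} + \sigma_{\ul b} = \sigma_{\ul a + \ul b}$ and the application of \ref{lemma:mult} are then routine.
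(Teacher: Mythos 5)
Your proposal is correct and follows essentially the same route as the paper: the paper likewise observes that since $\sigma_{\ul a}$ extends over $\ca M_{g,n}^{tl}$ by the N\'eron property, the cofinal system of modifications can be chosen to be isomorphisms over this locus (giving \ref{eqn:DRC_pullback}), and then deduces \ref{eq:tl_mult} from \ref{lemma:mult} (or \ref{thm:main}). Your extra care about compatibility of proper pushforward with restriction to the open locus is a reasonable elaboration of a step the paper leaves implicit.
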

\begin{proof}
Since the section $\sigma_{\ul a}$ extends to $\ca J_{g,n}$ over $\ca M_{g,n}^{tl}$, the blowups used to extend the section may be assumed to be isomorphisms over $\ca M_{g,n}^{tl}$; more precisely, the cofinal system in \ref{sec:b_DRC} can be chosen so that all of the birational maps $X \to \Mbar_{g,n}$ are isomorphisms over $\ca M_{g,n}^{tl}$. This proves (\oref{eqn:DRC_pullback}), and (\oref{eq:tl_mult}) then follows from \ref{thm:main}, or directly from \ref{lemma:mult}. 
\end{proof}

The classes $\mathrm{P}_g^{g,k}$ also satisfy multiplicativity on $\ca M_{g,n}^{tl}$:
\begin{proposition}\label{prop:pixton_mult}
Let $\ul A$, $\ul B$ be vectors of $n$ integers with $\sum A_i = k_{\ul a} (2g-2+n)$ and $\sum B_i = k_{\ul b} (2g-2+n)$ for some $k_{\ul a}, k_{\ul b} \in \mathbb{Z}$. Then the equality
 \begin{equation}  \label{eqn:multPixton}
  \mathrm{P}_g^{g,k_{\ul a}}(\ul A ) \cdot \mathrm{P}_g^{g,k_{\ul b}}(\ul B ) = \mathrm{P}_g^{g,k_{\ul a}}(\ul A ) \cdot \mathrm{P}_g^{g,k_{\ul a} + k_{\ul b}}(\ul A + \ul B )
 \end{equation}
holds in $\on{CH}_{\bb Q}^{2g}(\ca M_{g,n}^{tl})$. 
\end{proposition}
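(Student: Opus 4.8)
The plan is to reduce the degree-$2g$ identity \ref{eqn:multPixton} to a statement about the divisor classes $\mathrm{P}_g^{1,k}(\ul A)$ and then to exploit the quadratic dependence of these divisors on the ramification data. Note first that one cannot simply invoke \ref{lem:tl}: that lemma concerns the geometric cycle $\overline{\on{DRC}}_{\ul a}$, and equating it with $2^{-g}\mathrm{P}_g^{g,k}(\ul A)$ on $\ca M_{g,n}^{tl}$ is exactly the open \ref{conj:pixton_eq}, so the argument must stay inside the tautological ring. The first step is to apply \ref{lemma:pixton_tl}, the combinatorial generating-function identity on treelike curves that refines \ref{eqn:expPixton}: it expresses the mixed-degree class $\sum_d \mathrm{P}_g^{d,k}(\ul A)$, and in particular $\mathrm{P}_g^{g,k}(\ul A)$, as a universal expression built from the single divisor $\mathrm{P}_g^{1,k}(\ul A)$ together with explicit corrections supported on the non-separating (self-loop) boundary. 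This collapses \ref{eqn:multPixton} into an identity among products of these lower-complexity pieces.

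Next I would record the polarization structure. Comparing Pixton's formula for $\mathrm{P}_g^{1,k}(\ul A)$ with Hain's formula \ref{eqn:Hain} (valid after the identification $A_j = a_j + k$) shows that $\mathrm{P}_g^{1,k}(\ul A)$ is a quadratic form in $\ul A$ valued in divisors; hence there is a polarization identity
\begin{equation*}
\mathrm{P}_g^{1,k_{\ul a}+k_{\ul b}}(\ul A + \ul B) = \mathrm{P}_g^{1,k_{\ul a}}(\ul A) + \mathrm{P}_g^{1,k_{\ul b}}(\ul B) + 2 B(\ul A, \ul B),
\end{equation*}
with $B$ a symmetric, bilinear, divisor-valued cross-term, together with an analogous bilinear behaviour of the self-loop corrections. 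Substituting this into the difference of the two sides of \ref{eqn:multPixton} and expanding multinomially, the claim reduces to showing that every term containing at least one factor of $B$ (or of the mixed self-loop data) is annihilated after multiplication by the remaining power of $\mathrm{P}_g^{1,k_{\ul a}}(\ul A)$.

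The main obstacle is precisely this vanishing of the cross-terms in $\on{CH}_{\bb Q}^{2g}(\ca M_{g,n}^{tl})$. Combinatorially it encodes the two geometric inputs that drive the multiplicativity lemma \ref{lemma:mult}: that powers $\theta^{m}$ with $m>g$ of the theta class vanish on the $g$-dimensional fibres of the universal jacobian, and that the biextension pairing governing $\sigma_{\ul a}+\sigma_{\ul b}$ is bilinear. I would establish it either by transporting the diagram-and-projection-formula bookkeeping of \ref{lemma:mult} to the explicit tautological expressions, or by a direct degree-and-support argument using tautological relations on $\ca M_{g,n}^{tl}$. The genuinely new difficulty compared with the compact-type case of \ref{prop:pixton_eq_ct} is the self-loop contribution: on $\ca M_{g,n}^{ct}$ the generating function is a clean exponential \ref{eqn:expPixton}, whereas on $\ca M_{g,n}^{tl}$ the non-separating boundary produces correction factors, and checking that these cancel in the required bilinear fashion is where the combinatorial work concentrates.
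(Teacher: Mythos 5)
Your opening reduction matches the paper's first move: apply \ref{lemma:pixton_tl} to write everything in terms of $\exp(Q(\ul A))\Delta$, use that $Q$ is a divisor-valued quadratic form, and thereby reduce \ref{eqn:multPixton} to the vanishing of the bilinear cross-terms after multiplication by $[\exp(Q(\ul A))\Delta]_g$. One quibble already here: by \ref{lemma:pixton_tl} the self-loop factor $\Delta$ does \emph{not} depend on $\ul A$, so there is no separate ``bilinear behaviour of the self-loop corrections'' to verify --- the entire $\ul A$-dependence sits in the exponential, and the extra cancellation you anticipate on the non-separating boundary is vacuous once the lemma is granted.

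The genuine gap is at the crux: you never supply a mechanism for the cross-term vanishing, and neither of your two suggestions can work. Transporting the projection-formula bookkeeping of \ref{lemma:mult} to the tautological expressions is exactly the route you correctly ruled out at the start --- identifying $2^{-g}\mathrm{P}_g^{g,k}(\ul A)$ with the Abel--Jacobi pullback on $\ca M_{g,n}^{tl}$ is \ref{conj:pixton_eq}, which is open, so the theta-divisor/biextension geometry is not available to Pixton's classes. A ``direct degree-and-support argument'' also cannot succeed unaided: the required vanishing lives in codimension $g+1$ on a space of dimension $3g-3+n$, so nothing dies for dimension reasons; it is a genuinely nontrivial tautological relation. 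The paper's key input, absent from your proposal, is precisely the relation $\mathrm{P}_g^{g+1,k}(\ul A)=0$ of \cite[Theorem 5.4]{CladerJanda2016}. Via \ref{lemma:pixton_tl} this reads $[\exp(Q(\ul A+\ul C))\Delta]_{g+1}=0$ as a polynomial identity in $\ul A$ and $\ul C$; extracting the part linear in $\ul C$ gives
\[
[\exp(Q(\ul A))\Delta]_{g}\cdot\bigl(Q(\ul A+\ul C)-Q(\ul A)-Q(\ul C)\bigr)=0,
\]
and the substitution $\ul C=\ul A+2\ul B$, using quadraticity of $Q$ and dividing by $2$, yields $[\exp(Q(\ul A))\Delta]_{g}\cdot\bigl(Q(\ul A+\ul B)-Q(\ul B)\bigr)=0$. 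Since $\exp(Q(\ul A+\ul B))-\exp(Q(\ul B))$ is divisible by the divisor $Q(\ul A+\ul B)-Q(\ul B)$, this annihilates exactly the cross-terms your expansion isolates and yields \ref{eqn:multPixton}. Without the Clader--Janda relation (or an equivalent substitute) and this polarization trick, your outline stops one step short of a proof.
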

The consistency of this multiplicativity with that of \ref{lem:tl} provides evidence for \ref{conj:pixton_eq}.

There are two key ingredients in the proof of \ref{prop:pixton_mult}. The first is the basic codimension $g+1$ relation
\begin{equation} \label{eqn:g+1vanishing}
  \mathrm{P}_g^{g+1,k}(\ul A ) = 0 \in \on{CH}_{\bb Q}^{g+1}(\Mbar_{g,n})
 \end{equation}
 proved in \cite[Theorem 5.4]{CladerJanda2016}. The second is the following combinatorial lemma:
 \begin{lemma} \label{lemma:pixton_tl}
 Let $\mathrm{P}_g(\ul A)^{tl}$ denote the mixed-degree class in the Chow ring of the locus of treelike curves
 \[
 \mathrm{P}_g(\ul A)^{tl} := \sum_{d \geq 0} \mathrm{P}_g^{d,k}({\ul A}) \in \on{CH}_{\bb Q}^*(\ca M_{g,n}^{tl}).
 \]
 Then there exists a mixed-degree class $\Delta\in \on{CH}_{\bb Q}^*(\ca M_{g,n}^{tl})$ (not depending on ${\ul A}$) along with a divisor-valued quadratic form $Q({\ul A})\in \on{CH}_{\bb Q}^1(\ca M_{g,n}^{tl})$ such that
 \[
 \mathrm{P}_g(\ul A)^{tl} = \exp(Q({\ul A}))\Delta.
 \]
 \end{lemma}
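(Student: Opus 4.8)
The plan is to argue directly from Pixton's explicit formula for $\mathrm{P}_g^{d,k}(\ul A)$ as a sum over decorated stable graphs. Recall from \cite[Section 1.1]{Janda2016Double-ramifica} that, before extracting the degree-$d$ part and the $r$-polynomial constant term, the class is a sum over pairs $(\Gamma, w)$ of a stable graph $\Gamma$ of genus $g$ with $n$ legs together with a $\bb Z/r\bb Z$-weighting $w$ of its half-edges, compatible with $\ul A$ at the legs, with the $k$-twist at the vertices, and with $w(h)+w(h')=0$ on each edge. Each term is a pushforward $(\xi_\Gamma)_*$ of a product of local factors: a leg factor $\exp(\tfrac12 A_i^2\psi_{h_i})$, a vertex factor built from $\exp(-\tfrac{k^2}{2}\kappa_1)$, and an edge factor depending on $w$ only through $w(h)^2$ and on $\psi_h+\psi_{h'}$. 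Restricting to $\ca M_{g,n}^{tl}$ annihilates every term whose graph is not treelike, since the corresponding stratum does not meet the open locus; thus only trees-with-self-loops survive, and for these $h^1(\Gamma)$ is exactly the number of self-loops.

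First I would exploit the splitting of the weighting sum forced by the treelike condition. On a tree (separating) edge, removing the edge disconnects $\Gamma$, so the flow conditions determine $w(h)$ uniquely in terms of the $A_i$ and $k$ on one side; there is no summation and the resulting edge factor is $\ul A$-dependent. On a self-loop the two half-edges meet at a single vertex and contribute $w(h)+w(h')=0$ to the flow irrespective of $w(h)$, so the self-loop weight is a free parameter summed over $\bb Z/r\bb Z$; the resulting local factor, together with the $r^{-1}$ coming from $h^1(\Gamma)$, is entirely independent of $\ul A$ (and is polynomial in $r$, so its constant term is well defined). Hence the sum over weightings factors into a product over tree edges (each determined by $\ul A$) and self-loops (each an $\ul A$-independent sum), and the only surviving $\ul A$-dependence sits in the leg factors and in the separating-edge factors.

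The core of the argument is then to reorganize the sum over treelike graphs as an exponential. Working in $\on{CH}_{\bb Q}^*(\ca M_{g,n}^{tl})$, where products of boundary classes supported on separating nodes and on self-loops are controlled because no cycle of length $\geq 2$ contributes, I would apply the exponential formula for graph sums: the total class is the exponential of the sum over connected one-edge configurations attached to the vertices. This separates as $\exp(Q(\ul A))\cdot\Delta$, where $\Delta$ collects the $\ul A$-independent self-loop contributions (starting with the divisor obtained by pushing a single self-loop forward, hence $\Delta=1+\cdots$) and $\exp(Q(\ul A))$ collects the $\ul A$-dependent leg and separating-edge contributions. It then remains to check that the $\ul A$-dependent primitive part is concentrated in degree one, so that $Q(\ul A)$ is a single divisor class; matching its degree-one terms against Hain's computation \eqref{eqn:Hain} identifies it with $\mathrm{P}_g^{1,k}(\ul A)$, which is visibly a quadratic form in $\ul A$. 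The main obstacle is precisely this final collapse: showing that the connected $\ul A$-dependent contributions in degree $\geq 2$ either vanish or reassemble into powers of the divisor $\mathrm{P}_g^{1,k}(\ul A)$. On the compact-type sublocus this is the identity $\exp(\mathrm{P}_g^{1,k}(\ul A))=\sum_{d\geq 0}\mathrm{P}_g^{d,k}(\ul A)$ of \eqref{eqn:expPixton}, and the substance of the proof is to verify that this exponential structure survives verbatim once the $\ul A$-independent self-loop factors have been decoupled into $\Delta$.
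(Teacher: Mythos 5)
You set the proof up exactly as the paper does: restrict Pixton's graph sum to treelike graphs, observe that on a separating edge the mod-$r$ weighting is uniquely determined by $\ul A$ while on a self-loop the weight is a free summation variable producing an $\ul A$-independent, polynomial-in-$r$ local factor, and conclude via the Graber--Pandharipande refinement calculus that the class factors into an $\ul A$-independent loop part $\Delta$ times a tree part carrying all the $\ul A$-dependence. Up to this point your argument coincides with the paper's. But the remaining step is the entire substance of the lemma, namely the identity
\begin{equation*}
\mathrm{P}_g(\ul A)^{\text{tree}} \;=\; \exp\!\left([\mathrm{P}_g(\ul A)^{\text{tree}}]_{\text{deg }1}\right),
\end{equation*}
and your proposal stops precisely there: you name the ``final collapse'' as the main obstacle and express the hope that the exponential structure ``survives verbatim,'' rather than proving it. A generic exponential formula for graph sums does not apply here, because multiplication of boundary classes in the tautological ring is not free: repeated factors produce excess-intersection corrections. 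The paper closes exactly this gap by a direct computation with the multiplication formula of \cite[Appendix A]{GraberPandharipande2003}: a product of $k$ separating boundary divisors $\delta_{g_i}^{P_i}$ is a sum over trees equipped with a sequence of edges $e_1,\dots,e_k$ (with repetition, every edge occurring at least once, each $e_i$ inducing the splitting of $\delta_{g_i}^{P_i}$), where repeated edges contribute $\psi$ classes along that edge. Expanding $\exp(Q(\ul A))$ by this rule, the sum over the multiplicities with which each edge is repeated reproduces exactly the power-series edge factors in Pixton's definition, giving \ref{eqn:exp} term by term; without this verification the statement that the $\ul A$-dependent part is concentrated in a single divisor class is unsupported.

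A secondary inaccuracy: you propose to identify $Q(\ul A)$ with $\mathrm{P}_g^{1,k}(\ul A)$. In the paper $Q(\ul A)$ is the degree-one part of the \emph{tree} contribution only, given by twice Hain's formula \ref{eqn:Hain} interpreted on $\ca M_{g,n}^{tl}$. On the treelike locus $\mathrm{P}_g^{1,k}(\ul A)$ additionally contains the $\ul A$-independent loop divisor $[\Delta]_{\text{deg }1}$ (a multiple of $\delta_{irr}$-type classes), so setting $Q(\ul A)=\mathrm{P}_g^{1,k}(\ul A)$ would double-count that term against $\Delta$ and the claimed factorization $\mathrm{P}_g(\ul A)^{tl}=\exp(Q(\ul A))\Delta$ would fail. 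The two only agree after restriction to compact type, which is precisely the specialization \ref{eqn:expPixton} used in \ref{prop:pixton_eq_ct}.
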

 
 Before checking \ref{lemma:pixton_tl}, we use it to prove \ref{prop:pixton_mult}:
 \begin{proof}[Proof of \ref{prop:pixton_mult}]
 Using \ref{lemma:pixton_tl} we can rewrite the codimension $g+1$ relation for a vector ${\ul A} + {\ul C}$ as
 \[
 [\exp(Q({\ul A} + {\ul C}))\Delta]_{g+1} = 0,
 \]
 where $[X]_d$ denotes the codimension $d$ part of a mixed-degree class $X$.
 
 This relation is an equality of polynomials in the ${\ul A}$ and ${\ul C}$ variables, so it will still hold if we restrict to the part of degree $1$ in ${\ul C}$. This gives
 \[
 [\exp(Q({\ul A}))\Delta]_{g}\cdot(Q({\ul A} + {\ul C}) - Q({\ul A}) - Q({\ul C})) = 0.
 \]
 Changing variables with ${\ul C} = {\ul A} + 2{\ul B}$, using the fact that $Q$ is a quadratic form, and dividing by $2$, we arrive at the relation
 \[
 [\exp(Q({\ul A}))\Delta]_{g}\cdot(Q({\ul A} + {\ul B}) - Q({\ul B})) = 0.
 \]
 
 Now, the mixed-degree class $\exp(Q({\ul A} + {\ul B})) - \exp(Q({\ul B}))$ is clearly divisible by the divisor class $Q({\ul A} + {\ul B}) - Q({\ul B})$, so we have the relation
 \[
 [\exp(Q({\ul A}))\Delta]_{g}[(\exp(Q({\ul A} + {\ul B})) - \exp(Q({\ul B})))\Delta]_g = 0.
 \]
 Then applying \ref{lemma:pixton_tl} again gives the desired multiplicativity statement.
 \end{proof}

\begin{proof}[Proof of \ref{lemma:pixton_tl}]
This lemma is essentially a combinatorial statement about the definition of the classes $\mathrm{P}_g^{d,k}({\ul A})$ in \cite[Section 1.1]{Janda2016Double-ramifica} along with the multiplication formula for tautological classes given in \cite[Appendix A, eq. (11)]{GraberPandharipande2003}. In the general case, $\mathrm{P}_g^{d,k}({\ul A})$ is a sum over decorated (by $\psi$ and $\kappa$ classes) dual graphs $\Gamma$ of a combinatorial coefficient times the tautological class corresponding to $\Gamma$. The combinatorial coefficient is defined by taking the $r$-constant term of a polynomial in $r$ defined by summing over certain balanced `weightings mod $r$' of the half-edges of $\Gamma$.

In our case, we can assume that the graph $\Gamma$ is treelike and the combinatorial coefficients then become significantly simpler: the only weights that are allowed to vary are those in loops of the graph. The result is that the coefficient associated to a graph $\Gamma$ factors as a product of the contributions from the loops and the contributions from the non-loops. Using the graph refinement calculus of the tautological ring multiplication formula \cite[Appendix A]{GraberPandharipande2003}, this means that the entire mixed-degree class factors:
\[
\mathrm{P}_g({\ul A})^\text{treelike} = \mathrm{P}_g({\ul A})^\text{tree}\cdot \mathrm{P}_g({\ul A})^\text{irred},
\]
where the three classes are, respectively, the full class on the locus of treelike curves, those terms with $\Gamma$ a tree, and those terms where $\Gamma$ has exactly one vertex with no $\kappa$ decorations on it and no $\psi$ decorations on any legs (but possibly on loops). Moreover, the final class does not actually depend on the vector ${\ul A}$; we set
\[
\Delta := \mathrm{P}_g({\ul A})^\text{irred}
\]
(an explicit formula for $\Delta$ in terms of Bernoulli numbers can easily be written down, but we have no need for it here). 

For the remaining factor $\mathrm{P}_g({\ul A})^\text{tree}$, we claim that
\begin{equation}\label{eqn:exp}
\mathrm{P}_g({\ul A})^\text{tree} = \exp([\mathrm{P}_g({\ul A})^\text{tree}]_{\text{deg $1$}}).
\end{equation}
Then we can take
\[
Q({\ul A}) := [\mathrm{P}_g({\ul A})^\text{tree}]_{\text{deg $1$}},
\]
which explicitly is given by the same formula as Hain's formula \ref{eqn:Hain} (multiplied by $2$ and interpreted as divisors on the locus of treelike curves) and thus is a quadratic form in ${\ul A}$.

It remains to check \ref{eqn:exp} using the multiplication formula of \cite[Appendix A]{GraberPandharipande2003}. Suppose that for $i=1,\ldots,k$, $\delta_{g_i}^{P_i}$ are boundary divisor classes for separating nodes, so each such class corresponds to a graph with two vertices connected by a single edge along with a distribution of the total genus $g$ and markings between the two vertices (such that one has genus $g_i$ and marking $P_i$). If we multiply all of these $k$ divisor classes together, the multiplication formula in this case says that the result is a sum over the following data: a tree $\Gamma$ along with a distribution of genus and markings between the vertices of $\Gamma$ and a sequence of edges $e_1,\ldots,e_k$ in $\Gamma$ (possibly with repetition) such that
\begin{enumerate}
    \item the division of genus and markings across the two sides of edge $e_i$ agree with the division in $\delta_{g_i}^{P_i}$;
    \item every edge of $\Gamma$ appears at least once in the sequence $e_1,\ldots,e_k$.
\end{enumerate}
Repeated edges $e_i$ give rise to $\psi$ classes along that edge. 

Computing the right side of \ref{eqn:exp} (the exponential of a divisor class) by using the above procedure to multiply divisor classes together then gives precisely the sum over trees appearing in the definition of $\mathrm{P}_g({\ul A})^\text{tree}$.

\end{proof}

\section{Failure of multiplicativity in the Chow ring of $\Mbar_{g,n}$}\label{sec:example}
Since both sides of \ref{eqn:multPixton} make sense in the Chow ring of $\Mbar_{g,n}$, it is natural to ask whether the multiplicativity stated in \ref{prop:pixton_mult} might hold not just on the locus of treelike curves but on the entire space of stable curves. In this section we present an explicit example where this desired equality fails and in fact argue that there can be no other extension of the cycles $\on{DRC}_{\ul a}$ from $\ca M_{g,n}^{ct}$ that would make the equality hold. In other words, multiplicativity is really a feature of the (small) $b$-Chow ring and not of the standard Chow ring.

 Let $g=1, k=0$ and consider the two partitions ${\ul a}=(2,4,-6)$, ${\ul b}=(-3,-1,4)$ of $0$. Let $\overline{\on{DRC}}_{\ul a}$, $\overline{\on{DRC}}_{\ul b}$, $\overline{\on{DRC}}_{{\ul a}+{\ul b}} \in \on{CH}_{\bb Q}^1(\Mbar_{1,3})$ be the corresponding double ramification cycles. 
 % By \cite{Holmes2017Extending-the-d} these agree with the double ramification cycles computed in \cite{Janda2016Double-ramifica}, so they have explicit formulas in terms of tautological classes. 
 By  \ref{pro:comparison} these agree with the corresponding $\mathrm{P}_1^{1}({\ul A})$, which can be computed as explicit tautological classes.
 Using an implementation of the tautological ring by the second author one can check that the multiplicativity fails inside the Chow group of $\Mbar_{1,3}$, i.e.
 \begin{equation} \label{eq:relation_example}
  \overline{\on{DRC}}_{\ul a} \cdot \overline{\on{DRC}}_{\ul b} \neq \overline{\on{DRC}}_{\ul a} \cdot \overline{\on{DRC}}_{{\ul a}+{\ul b}} \in \on{CH}_{\bb Q}^2(\Mbar_{1,3}).
 \end{equation}
 What is true however is that the difference of the two sides in \ref{eq:relation_example} is a linear combination of the classes of the three irreducible components of $\Mbar_{1,3} \setminus \ca M_{1,3}^{tl}$. In other words, \ref{eq:relation_example} becomes an equality once we restrict to the locus $\ca M_{1,3}^{tl}$ of treelike curves, as proved in \ref{prop:pixton_mult}. Moreover, actually both sides of \ref{eq:relation_example} give nontrivial elements of $\on{CH}_{\bb Q}^2(\ca M_{1,3}^{tl})$. In particular, this shows that for the above example the two sides of the multiplicativity statement in the (small) $b$-Chow ring are also nontrivial. This gives an indication that the multiplicativity there does not hold for some trivial reason (like both sides always vanishing, for instance).
 
 Now one final hope for multiplicativity on $\Mbar_{g,n}$ could be that the cycles $\overline{\on{DRC}}_{\ul a}$, $\overline{\on{DRC}}_{\ul b}$, $\overline{\on{DRC}}_{{\ul a}+{\ul b}}$ are not the right extension of the corresponding Abel-Jacobi pullbacks $\sigma_{\ul a}^*[e], \sigma_{\ul b}^*[e], \sigma_{{\ul a}+{\ul b}}^*[e] \in \on{CH}_{\bb Q}^1(\ca M_{1,3}^{ct})$ on the locus of compact type curves. However, the complement $\Mbar_{1,3} \setminus \ca M_{g,n}^{ct}$ is exactly given by the boundary divisor $\Delta_{irr}$ generically parametrising irreducible nodal curves. Hence any such extensions must have the form
 \begin{align*}
  \widetilde{\on{DRC}}_{\ul a} &= \overline{\on{DRC}}_{\ul a} + \lambda_{\ul a} \cdot \Delta_{irr},\\
  \widetilde{\on{DRC}}_{\ul b} &= \overline{\on{DRC}}_{\ul b} + \lambda_{\ul b} \cdot \Delta_{irr},\\
  \widetilde{\on{DRC}}_{{\ul a}+{\ul b}} &= \overline{\on{DRC}}_{{\ul a}+{\ul b}} + \lambda_{{\ul a}+{\ul b}} \cdot \Delta_{irr}.
 \end{align*}
 Using that $\Delta_{irr}^2=0$ we compute 
 \begin{align*}
  &\widetilde{\on{DRC}}_{\ul a} \cdot (\widetilde{\on{DRC}}_{\ul b} - \widetilde{\on{DRC}}_{{\ul a}+{\ul b}})\\ 
  =& \underbrace{\overline{\on{DRC}}_{\ul a} \cdot (\overline{\on{DRC}}_{\ul b} - \overline{\on{DRC}}_{{\ul a}+{\ul b}})}_{I_1} + (\lambda_{\ul b} - \lambda_{{\ul a}+{\ul b}}) \underbrace{\overline{\on{DRC}}_{\ul a} \cdot \Delta_{irr}}_{I_2}+ \lambda_{\ul a} \underbrace{\Delta_{irr} \cdot (\overline{\on{DRC}}_{\ul b} - \overline{\on{DRC}}_{{\ul a}+{\ul b}})}_{I_3}.
 \end{align*} 
 However, it can be checked by computer that the three elements $I_1, I_2, I_3 \in \on{CH}_{\bb Q}^2(\Mbar_{1,3})$ are linearly independent. Therefore there is no way to choose $\lambda_{\ul a},\lambda_{\ul b},\lambda_{{\ul a}+{\ul b}}$ to have the $\widetilde{\on{DRC}}$ satisfy multiplicativity in the Chow ring of $\Mbar_{g,n}$; we only have multiplicativity in the (small) $b$-Chow ring or on the open locus of treelike curves.

\section{Logarithmic version; the Chow ring of the valuativisation}\label{sec:log_version}

We work with log structures in the sense of Fontaine-Illusie, using Olsson's generalisation to stacks \cite{Olsson2001Log-algebraic-s}. We put a log structure on $\Mbar_{g,n}$ and its universal curve as in Kato \cite{Kato1996Log-smooth-defo}. 

Following \cite{Kato1989Logarithmic-deg}, we define the \emph{valuativisation} of a log scheme or stack to be the limit of all the log blowups; this does not exist as a scheme (or stack), but does exist as either a locally ringed space or a pro-scheme over $\Mbar_{g,n}$; we take the latter approach. Taking a cofinal system of regular objects yields a natural Chow ring of the valuativisation as the colimit of the Chow rings of the blowups, just as in our section \ref{sec:b_Chow} but with a more restricted class of modifications. We can still define the double ramification cycle in this setting, since the modifications we take in \ref{sec:b_DRC} can be assumed to be logarithmic blowups, cf. \cite[Lemma 6.1]{Holmes2017Extending-the-d}. 

Why might we want to do this? Firstly, it reduces the number of modifications we have to work with (this can be a help or a hinderance, depending on circumstances). In particular, a logarithmic blowup can always be dominated by a regular logarithmic blowup, so this approach can be carried out in positive and mixed characteristic. In this way we can prove an analogue of \ref{thm:main} over $\on{Spec} \bb Z$, where the relation holds in the Chow ring of the valuativisation. 

Another reason to consider this approach is the derived equivalence between the valuativisation and a certain root stack, \cite{Scherotzke2016On-a-logarithmi}. We hope that this derived equivalence might shed some light on the relation between the first author's construction in \cite{Holmes2014A-Neron-model-o} of a universal N\'eron-model-admitting stack, and Chiodo's work \cite{Chiodo2015Neron-models-of}. More generally, it might realise our $\on{bDRC}\in \on{bCH}_{\bb Q}(\Mbar_{g,n})$ as a shadow of some more refined derived object.

\bibliographystyle{alpha} %amsplain}
\bibliography{prebib.bib}

\end{document}